\documentclass[12pt]{amsart}
\usepackage{dsfont}
\usepackage{mathrsfs} 
\usepackage{amssymb, changes}
\usepackage{amsmath}
\usepackage{amsfonts}
\usepackage{bbm}
\usepackage{amsthm}
\usepackage{amsbsy}
\usepackage{amsgen}
\usepackage{amscd}
\usepackage{amsopn}
\usepackage{amstext}
\usepackage{amsxtra}
\usepackage{times}
\usepackage[margin=60pt]{geometry}

\usepackage[colorlinks=true,linkcolor=blue,citecolor=blue,urlcolor=blue]{hyperref}
\usepackage[foot]{amsaddr}
\usepackage{comment}
\usepackage{xcolor}
\definecolor{lime}{HTML}{A6CE39}
\DeclareRobustCommand{\orcidicon}{%
	\begin{tikzpicture}
	\draw[lime, fill=lime] (0,0) 
	circle [radius=0.16] 
	node[white] {{\fontfamily{qag}\selectfont \tiny ID}};
	\draw[white, fill=white] (-0.0625,0.095) 
	circle [radius=0.007];
	\end{tikzpicture}
	\hspace{-2mm}
}
\foreach \x in {A, ..., Z}{%
	\expandafter\xdef\csname orcid\x\endcsname{\noexpand\href{https://orcid.org/\csname orcidauthor\x\endcsname}{\noexpand\orcidicon}}
}
\theoremstyle{plain}
\newtheorem{theorem}{Theorem}[section]
\newtheorem{lemma}[theorem]{Lemma}
\theoremstyle{remark}

\newcommand{\R}{\mathbb{R}}
\newcommand{\N}{\mathbb{N}}
\renewcommand{\P}{\mathbb{P}}

\newcommand{\E}{\mathbb{E}}
\newcommand{\F}{\mathcal{F}}
\newcommand{\1}{\mathds{1}}

\begin{document}

\title[Recurrence and transience of random walks with drift $\rho x^{\alpha}/t^{\beta}$]
{Recurrence and transience of random walks with drift $\rho x^{\alpha}/t^{\beta}$}

\author[]{Ngo P. N. Ngoc\orcidB{}}
\address[N. P. N. N.]{Institute of Research and Development, and  
Faculty of Natural Sciences, Duy Tan University, Danang 550000, Vietnam}

\email{ngopnguyenngoc@duytan.edu.vn}
\author[]{Tuan-Minh Nguyen\orcidA{}}
\address[T.-M. N.]{School of Mathematics, Monash University, 9 Rainforest Walk, Clayton 3800, Victoria, Australia}
\email{tuanminh.nguyen@monash.edu}

\subjclass[2020]{60F15, 82B41, 82C41}
\keywords{random walks with spatio-temporal drift, time-inhomogeneous random walks, recurrence and transience}

\begin{abstract}
Menshikov and Volkov [Electron. J. Probab. {\bf 13} (2008)] studied recurrence and transience of a class of Markovian random walks on $\R_+$ whose conditional drift depends on both time and position and is of order $\rho x^\alpha t^{-\beta}$ with $\rho>0$. The case on the critical line $2\beta-\alpha=1$, with $\alpha\in(-1,1)\setminus\{0\}$, remained open. We prove recurrence in this remaining case. Furthermore, we establish recurrence and transience criteria that complete the classification for $-1<\alpha<1$ and $\beta\ge 0$, without assuming the Markov property and under weaker assumptions on the increments than those imposed by Menshikov and Volkov.
\end{abstract}

\maketitle

\section{Introduction}\label{sec:intro}
\subsection{Model description and main results} 
Throughout this paper, we denote by $\mathbb Z_+$ and $\R_+$  the sets of non-negative integers and non-negative real numbers, respectively. Also, let $\N=\mathbb Z_+\setminus\{0\}$ be the set of natural numbers. 
Let $(X_t)_{t\in \mathbb Z_+}$ be a discrete-time $\R_+$-valued process adapted to a filtration
$(\F_t)_{t\in \mathbb Z_+}$, with deterministic initial state $X_0\in\R_+$. For each $t\in \N$, let
$\Delta_{t}:=X_{t}-X_{t-1}$ be the increment at time $t$. Denote $$\mu_t:=\E(\Delta_{t+1}\mid\F_t)\quad\text{for $t\in \mathbb Z_+$},$$ which stands for the conditional drift. Fix $a>0$. We say that the process $(X_t)_{t\in \mathbb Z_+}$ is:
\begin{itemize}
    \item \textit{recurrent} if $\P(X_t<a\, \text{infinitely often})=1$,
    \item \textit{transient} if $\P(X_t\to\infty)=1$.
\end{itemize}

Menshikov and Volkov \cite{MV} studied discrete-time Markov processes for which the conditional drift
is comparable to $\rho X_t^\alpha/t^{\beta}$ on $\{X_t>a\}$, where $\rho>0$, $\beta\ge\alpha$ and $\beta\ge 0$ are fixed parameters. Their model was motivated by a problem concerning Friedman-type urns posed by Freedman \cite{Freedman65}.
When $\beta\ge0$ and $\alpha<\min\{\beta,2\beta-1\}$ they proved the recurrence under uniform boundedness and non-degeneracy of the increments. When $0 \le \beta < 1$ and $2\beta - 1 < \alpha < \beta$, they proved transience under the same assumptions on the increments, together with an additional assumption on the time required to leave $[0,a]$. 

The case on the critical line $2\beta-\alpha=1$ with $-1<\alpha<1$ was left open in \cite[\S5.3]{MV},
apart from the point $(\alpha,\beta)=(0,1/2)$, where recurrence for every $\rho>0$ follows from a martingale law of the iterated logarithm; see also \cite[Proposition~2.7]{Freedman75}. The endpoint cases  $\alpha\in \{-1,1\}$ were treated separately.

When $\alpha=-1$ and $\beta=0$, the drift is comparable to $\rho/X_t$. This case corresponds to the classical Lamperti problem; see \cite{Lamperti60,Lamperti63} and \cite[Theorem~5]{MV}. Passage-time moments in this setting are studied in \cite{AIM}; a systematic treatment of near-critical processes by the Lyapunov function method is given in \cite{MPW}; and an application to polling systems appears in \cite{MZ}.

When $\alpha=\beta=1$, the drift is comparable to $\rho X_t/t$. In this case, the process exhibits a phase transition in the parameter $\rho$ between recurrence and transience, with critical value $\rho=1/2$; see \cite[Corollary~1]{MV}. On $\mathbb Z$, the model is related to the \textit{elephant random walk} introduced by Sch\"utz and Trimper \cite{SchutzTrimper}, where 
\[ \E(X_{t+1}-X_t\mid\F_t) = (2p-1)\frac{X_t}{t}\quad\text{for $t\in \N$.}\]
Thus $\rho=2p-1$, and the critical value $\rho=1/2$ corresponds to $p=3/4$. The recurrence and transience criteria for the multi-dimensional elephant random walks were established in \cite{Qin}. A notable extension of the Schütz–Trimper model to $\mathbb{R}$, in which the step distribution can be arbitrary, is the \emph{step-reinforced random walk}, introduced by Bertoin~\cite{B2020b,B2021,B2022,B2024}. The recurrence-transience phase transition of this model has been proved recently in \cite{Qin2025}.

A continuous-time diffusion counterpart of the Menshikov--Volkov model was later studied by Gradinaru and Offret~\cite{GO,O2014}. It describes a one-dimensional Brownian motion dynamics evolving in a time-dependent potential $V_{\rho,\alpha,\beta}$:
$$
{\rm d} X_t =  - \frac{1}{2} \partial_x V_{\rho,\alpha,\beta}(t, X_t)\, \mathrm{d} t + \mathrm{d} B_t, \quad X_{t_0} = x_0,
$$
where
$V_{\rho,\alpha,\beta}(t,x) := -\frac{2\rho}{\alpha+1}\,\frac{|x|^{\alpha+1}}{t^\beta}.$
More recently, limit laws for an extension of the Menshikov--Volkov model to $\R^d$ have been studied in \cite{NN} under stronger moment and conditional covariance assumptions. 

Abramov considered a time-inhomogeneous birth-and-death process in \cite{Abr1}. A subsequent paper \cite{Abr2} states a recurrence-transience criterion that is claimed to apply to time-inhomogeneous random walks with real-valued increments. We explain why the criterion in \cite{Abr2} does not provide a proof for recurrence and transience of the Menshikov-Volkov model. Theorem~2.1 of \cite{Abr2} is formulated in terms of the time-dependent rates $\lambda_{x,t}=\frac12+\phi(x,t)$ and $\mu_{x,t}=\frac12-\phi(x,t)$ which specify a birth-and-death mechanism with infinitesimal drift $2\phi(x,t)$.
Formally setting $\phi(x,t)=\frac{\rho}{2}\frac{x^\alpha}{t^\beta}$
gives $\phi(x,x^2)=\rho/(2x)$ when $2\beta=1+\alpha$. The criterion stated in \cite[Theorem~2.1]{Abr2} then gives recurrence for $\rho<1/2$ and transience for $\rho>1/2$. However, the process considered in \cite{Abr2} is a particular continuous-time compound Poisson process with prescribed jump-size distributions, whereas the conditional drift alone does not determine the transition law of a random walk with general real-valued increments. The proof in \cite{Abr2} does not establish an equivalence between these two models. There is also a direct inconsistency with a previously established special case. At $(\alpha,\beta)=(0,1/2)$, the formal specialization of \cite[Theorem~2.1]{Abr2} gives transience for $\rho>1/2$, whereas recurrence for every $\rho>0$ is proved in \cite[\S5.3]{MV} under the non-degeneracy and bounded increment assumptions imposed there. Thus the criterion stated in \cite{Abr2} cannot apply generally to processes covered by the model of \cite{MV}. 

Our aim in this paper is to establish recurrence and transience criteria for all $-1<\alpha<1$ and $\beta\ge 0$, without assuming the Markov property and under weaker assumptions on the increments than those imposed by Menshikov and Volkov. Throughout this paper, we assume the following two conditions for the square increments. 

\begin{itemize}
\item[(H1)] 
For every $\varepsilon>0$, there exists a deterministic $L<\infty$ such that, for every $t\geq0$, 
$$\E\left( \Delta_{t+1}^2\1_{\{|\Delta_{t+1}|>L\}} \mid\F_t \right) \leq\varepsilon\quad\text{a.s.}
$$

\item[(H2)] There exists
$B>0$ such that
\[
\E(\Delta_{t+1}^2\mid\F_t)\ge B
\quad\text{a.s. on }\{X_t\ge a\},\quad t\ge0.
\]
\end{itemize}
Our first main result below gives a unified recurrence theorem for $\beta\geq(1+\alpha)/2$ and $-1<\alpha<1$, and thus settles the equality case left open in \cite[\S5.3]{MV}.

\begin{theorem}\label{thm:main}
Let $\alpha\in(-1,1)$, $\beta\in \big[\frac{1+\alpha}{2},\infty\big)$ and $\rho>0$. Suppose that $(X_t)$ satisfies \textup{(H1)} and \textup{(H2)}
and that there exists a deterministic integer $t_0\ge1$ such that, for every
$t\ge t_0$,
\begin{equation}\label{eq:drift-upper}
\E(\Delta_{t+1}\mid\F_t)
\le
\rho\frac{X_t^\alpha}{t^\beta}
\quad\text{a.s. on }\{X_t\ge a\}.
\end{equation}
Then
\[
\P(X_t<a\ \text{infinitely often})=1.
\]
\end{theorem}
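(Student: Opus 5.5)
The plan is a two-stage argument that exploits diffusive scaling. On the line $2\beta=1+\alpha$ (and a fortiori for $2\beta>1+\alpha$), the drift at level $x$ and time $t$ satisfies
\[
x\cdot\rho\frac{x^\alpha}{t^\beta}\le \rho\Bigl(\frac{x^2}{t}\Bigr)^{(1+\alpha)/2}\qquad (t\ge1).
\]
So in the region $x\le \varepsilon\sqrt t$ the walk is a perturbation of a driftless walk in the Lamperti sense: $2x\mu_t/\sigma^2$ is small there. The two stages are:

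\textbf{(A)} $X_t\le \varepsilon\sqrt t$ infinitely often, a.s., for a suitably small fixed $\varepsilon$.

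\textbf{(B)} Uniformly in $s$, from any such time $s$ the walk enters $[0,a)$ before its level reaches $2\varepsilon\sqrt s$ (and before time $2s$, say) with conditional probability at least some $\delta>0$.

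A conditional Borel--Cantelli argument (L\'evy's extension, applied along a sequence of stopping times $\tau_k$ at which $X_{\tau_k}\le\varepsilon\sqrt{\tau_k}$, with $\tau_{k+1}\ge 2\tau_k$) then gives $\P(X_t<a \text{ i.o.})=1$. Throughout I would use (H1) to truncate increments at a level $L$, so that Taylor expansions of Lyapunov functions have controlled error, and (H2) to bound the conditional variance from below. This is the same way (H1)--(H2) replace the uniform boundedness used in \cite{MV}.

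\textbf{Stage (B).} I would use the Lyapunov function $f(x)=x^{\gamma}$ with a small exponent $\gamma\in(0,1)$. Expanding,
\[
\E\bigl(f(X_{t+1})-f(X_t)\mid\F_t\bigr)\le \gamma X_t^{\gamma-2}\Bigl(X_t\mu_t-\tfrac{1-\gamma}{2}B+o(1)\Bigr),
\]
where the $o(1)$ term comes from the truncation via (H1) and vanishes as $X_t\to\infty$. When $a'\le X_t\le 2\varepsilon\sqrt t$ we have $X_t\mu_t\le \rho(2\varepsilon)^{1+\alpha}$, which is less than $\tfrac{1-\gamma}{4}B$ once $\varepsilon$ is small. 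Hence $f(X_t)$ is a supermartingale up to the exit time of the strip $[a',2\varepsilon\sqrt t\,]$. Optional stopping then gives a hitting probability for $[0,a')$ of at least about $1-2^{-\gamma}>0$, before reaching $2\varepsilon\sqrt s$. The moving boundary $2\varepsilon\sqrt t\ge 2\varepsilon\sqrt s$ only helps, since I stop at the fixed level $2\varepsilon\sqrt s$. To get from $[0,a')$ to $[0,a)$ when $a'>a$, I would use (H2) together with the small drift near $0$ to show that a bounded number of steps reaches $[0,a)$ with probability bounded below. If that is delicate, I would instead take $a'$ close to $a$ by a standard finite-range argument.

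\textbf{Stage (A).} This is the step I expect to be the main obstacle, because the drift does not vanish on the scale $\sqrt t$ and so $X_t$ might in principle escape along $x\asymp\sqrt t$. I would consider $Y_t=X_t^{1-\alpha}$, which for $\alpha\in(-1,1)$ has exponent in $(0,2)$. Its drift is at most
\[
(1-\alpha)\rho\, t^{-\beta}-\tfrac{(1-\alpha)\alpha}{2}B X_t^{-1-\alpha}+\text{error}.
\]
In the case $\alpha\ge0$ the second term already gives $\E(Y_{t+1}-Y_t\mid\F_t)\le C t^{-\beta}$. Summing gives $Y_t\le Ct^{1-\beta}+M_t$, with $M_t$ a martingale whose increments are of size $X^{-\alpha}$. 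Comparing with $t^{(1-\alpha)/2}=t^{1-\beta}$ then shows that $X_t/\sqrt t$ is tight and bounded in probability.

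Tightness is not enough: I need $X_t/\sqrt t$ to dip below $\varepsilon$. For that I would pass to the space-time function $g(t,x)=\log(x^2/t)$, or equivalently work in logarithmic time $u=\log t$. In these coordinates, $V_u=X_{e^u}/e^{u/2}$ behaves like an Ornstein--Uhlenbeck-type process with confining term $-V/2$ and additional drift at most $\rho V^\alpha$, with $\alpha<1$. The key computation is to show that on $\{X_t\le K\sqrt t\}$ the process $\log(X_t^2/t)$ has conditional variance of order $1/t$ per step, so order one per unit of $\log t$, and drift bounded above. A martingale central limit or LIL argument on the $\log t$ scale then forces $\liminf_t X_t/\sqrt t=0$, by the same mechanism as the LIL argument at $(\alpha,\beta)=(0,1/2)$. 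For $\beta>(1+\alpha)/2$ the drift is only smaller, so the same estimates apply a fortiori. For $\alpha<0$ the $X^{-1-\alpha}$ term has the wrong sign, and I would control it by confining attention to $X_t\ge \varepsilon\sqrt t$, where it is $O(t^{-\beta})$.
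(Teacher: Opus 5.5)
\textbf{There is a genuine gap, and it is Stage (A).} Tightness of $X_t/\sqrt t$ only gives $X_t\le K\sqrt t$ infinitely often for some large $K$. The paper's Lemma~\ref{lem:Y-drift} does this cleanly with the supermartingale bound $\E(Y_{t+1}-Y_t\mid\F_t)\le -Y_t/(2(t+1))$ on $\{Y_t>r\}$, where $Y_t=X_t^2/t$.

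To go further, down to a small fixed $\varepsilon$, you need a lower bound, uniform in the past, on the probability that $\log(X_t^2/t)$ descends by the fixed amount $\log(K^2/\varepsilon^2)$. This must happen within an $O(1)$ window of $\log t$, against a drift that can be positive and is of order one per unit of $\log t$.

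None of your proposed tools supplies this:
\begin{itemize}
\item An LIL is useless here. Over $u$ units of $\log t$ the drift contributes order $u$, while the fluctuations are only $\sqrt{u\log\log u}$.
\item The analogy with $(\alpha,\beta)=(0,1/2)$ breaks down. There the compensator $\sum\rho s^{-1/2}$ is deterministic. Here $\sum_s \rho X_s^\alpha s^{-\beta}$ is path-dependent and of the same order $\sqrt t$ as the fluctuations.
\item A martingale CLT needs the normalized conditional variances to converge. (H1)--(H2) give only upper and lower bounds, in a non-Markovian setting. You would need an invariance principle via random time change, together with control of the position-dependent drift, and none of this is sketched.
\end{itemize}
Stage (B) is fine in outline; it is a small-drift version of Lemma~\ref{lem:interval-exit}. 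You still owe a proof that the strip is exited almost surely, and an expected-exit-time bound for your ``before time $2s$'' clause. Since only an \emph{upper} drift bound is assumed, $X_t^2$ need not be a submartingale. The paper gets exit-time control from the square of the bounded submartingale $g_N(N+L)-h_N(X)$.

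\textbf{The missing idea is that Stage (A) is unnecessary.} You do not need the ratio $x\mu_t/\sigma^2$ to be small; you only need a Lyapunov function that tolerates it being of order one.
\begin{itemize}
\item At a time $s$ with $X_s^2\le rs$, $r$ large, set $N=\max\{N_0,2X_s\}$.
\item Then $N^2/t\le 4r$ for $t\ge s$. So before exiting $[a,N]$, the drift is at most $cX_t^\theta N^{-(1+\theta)}$, where $c=\rho(4r)^\beta$ is large but fixed and $\theta=\min\{\alpha,0\}$.
\item Use the scale-function-type Lyapunov function $g_N(y)=\int_0^y e^{-C(w/N)^{1+\theta}}\,dw$, with $C$ proportional to $c/B$. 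Its second derivative is large enough to absorb this drift, so $g_N(X)$ is a supermartingale up to the exit time.
\item Optional stopping then gives $\P(X_\tau<a\mid\F_s)\ge 1-G(1/2)/G(1)$ directly, uniformly in $s$ and $N$.
\end{itemize}
Your choice $f(x)=x^\gamma$ works only when $x\mu_t/\sigma^2$ is small, and that is exactly what pushed you into the hard Stage (A). Replacing it by this exponentially tilted scale function removes the need for Stage (A). It also removes the $a'$ versus $a$ issue, via the constant $(1+L/a)^\theta$ the paper uses to compare $g_N''$ at $x+L$ with $g_N''$ at $x$.
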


Theorem~\ref{thm:main} also weakens the structural assumptions imposed in \cite{MV}. Menshikov and Volkov assume uniformly bounded increments and formulate their recurrence theorem for Markov processes. Hypothesis \textup{(H1)} permits unbounded increments, and the proof below uses only conditional moment inequalities and stopping times, without requiring the Markov property.

Let $\sigma_0:=\zeta_0:=0$, and  for $j\geq0$, let 
$$\sigma_{j+1} := \inf\{t>\zeta_j:X_t\leq a\}, \quad \zeta_{j+1} := \inf\{t\geq\sigma_{j+1}:X_t>a\}$$
be the successive entrance and exit times of $[0,a]$. 
On $\{\sigma_j<\infty\}$, let 
$$R_j:=\zeta_j-\sigma_j$$
be the $j$-th sojourn duration of the process in $[0,a]$. To establish transience, in addition to \textup{(H1)} and \textup{(H2)}, we impose the following condition. 
\begin{itemize} \item[\textnormal{(H3)}]
There exists a deterministic constant $C<\infty$ such that for every $j\geq1$,
$$\E\left( R_j \mid\F_{\sigma_j} \right)\leq C \quad\text{a.s. on $\{\sigma_j<\infty\}$.}$$
\end{itemize}
Our following result for transience does not require Markov property. 
\begin{theorem}\label{thm:transience}
Let $\alpha\in(-1,1)$, $\beta\in \left[0, \frac{\alpha+1}{2}\right)$. 
Suppose that $(X_t)$ satisfies \textup{(H1), (H2), (H3)} and that there exist $\rho>0$ and a deterministic integer $t_0\geq1$ such that, for every $t\geq t_0$, \begin{equation}\label{ineq:drift2} \E(\Delta_{t+1}\mid\F_t) \geq \rho\frac{X_t^\alpha}{t^\beta} \quad \text{a.s. on $\{X_t\geq a\}$.}
\end{equation} 
Then $$ \P(X_t\to\infty)=1.$$
\end{theorem}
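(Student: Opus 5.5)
The plan is a Lyapunov-function argument in which each excursion above $a$ has a uniformly positive probability of escaping to infinity, combined with (H3) to control the time spent in $[0,a]$. Set $\gamma:=1-\alpha>0$. On the scale $X\sim t^{\nu}$, the drift term $X^\alpha t^{-\beta}$ dominates the diffusive term when $\alpha\nu-\beta>\nu-1$, that is, when $\nu<(1-\beta)/(1-\alpha)$. Since $\beta<(1+\alpha)/2$ is equivalent to $(1-\beta)/(1-\alpha)>1/2$, we can fix $\nu\in\bigl(1/2,(1-\beta)/(1-\alpha)\bigr)$ with $\nu\le 1$ (when $\beta<\alpha$ the condition is only $\nu<(1-\beta)/(1-\alpha)$, and any admissible $\nu$ will do). The aim is to show that, from any time $s$ at which $X_s\ge a$ for the first time after an entrance, the event $\{X_t\ge c(t-s)^{\nu}\ \text{for all}\ t\ge s\}$ has probability at least $p>0$, uniformly in $s$ and $\F_s$.

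To obtain this, I will use the process $Y_t:=X_t^{\gamma}$, or a truncated version $f(X_t)$ with $f(x)=x^{\gamma}$ for $x\ge a$. Taylor expansion together with (H1), which controls large jumps via truncation at $L$, gives on $\{X_t\ge M\}$
\[
\E(Y_{t+1}-Y_t\mid\F_t)\ \ge\ \gamma\rho\, t^{-\beta}-C X_t^{-1-\alpha}.
\]
On the region $\{X_t\ge c\,t^{\nu}\}$ with $t$ large, the second term is $O(t^{-\nu(1+\alpha)})$. Since $\nu(1+\alpha)>\beta$ whenever $\nu>\beta/(1+\alpha)$, and this holds because $\beta/(1+\alpha)<1/2<\nu$, the drift of $Y$ is at least $\tfrac{\gamma\rho}{2}t^{-\beta}$. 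Hence $Y_t-\sum_{u<t}\tfrac{\gamma\rho}{2}u^{-\beta}$ is a submartingale while the process stays in the region, and the compensator grows like $t^{1-\beta}$, which is of order $(t^{\nu})^{\gamma}$ or larger. The martingale part has conditional variance $O(X^{-2\alpha})$, so its fluctuations are of order $t^{(1-2\alpha\nu)/2}$, which is $o(t^{1-\beta})$ by the choice of $\nu$. An exponential, or Freedman-type, maximal inequality applied along dyadic time blocks then shows that once $X$ is above a large level $K$, it stays above $c\,t^{\nu}$ forever with probability at least $1-\varepsilon(K)$, where $\varepsilon(K)\to0$.

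The remaining step, which I expect to be the main obstacle, is to show that from a fresh exit of $[0,a]$ the walk reaches level $K$ before returning to $[0,a]$ with probability bounded below uniformly in time. This has to come from (H2) together with the nonnegative drift at large times. I will consider the stopped process $X_{t\wedge\tau}$, where $\tau$ is the exit time from $(a,K)$. On $\{a\le X_t\le K\}$ the drift is nonnegative and the conditional variance is at least $B$. Applying this to $g(X)$ with $g$ concave and increasing, and using (H1) to handle overshoots, the optional stopping theorem yields $\P(X_\tau\ge K)\ge q(K)>0$. A subtlety is that the exit point $X_{\zeta_j}$ may be just above $a$. I handle this by noting that (H2) with (H1) forces a jump of size at least $\delta$ with positive conditional probability, via a Paley--Zygmund-type estimate. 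Alternatively, I will replace $a$ by a slightly larger threshold $a'$ and use that $\{X_t<a'\ \text{i.o.}\}$ controls $\{X_t<a\ \text{i.o.}\}$ through the same escape estimate.

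To conclude, each excursion above $a$ starting at some $\zeta_j$ escapes with probability at least $p:=q(K)(1-\varepsilon(K))>0$, conditionally on $\F_{\zeta_j}$. Condition (H3) guarantees $\zeta_j<\infty$ almost surely whenever $\sigma_j<\infty$. By the conditional Borel--Cantelli (Lévy) lemma, almost surely only finitely many $\sigma_j$ are finite, so eventually $X_t>a$ and, on the escape event, $X_t\ge c\,t^{\nu}\to\infty$. Since the escape events are the only way the excursions terminate without a return, this gives $\P(X_t\to\infty)=1$.
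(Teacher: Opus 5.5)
Your proposal has a genuine gap at its central claim: that an excursion starting at a fresh exit time $s$ of $[0,a]$ escapes with conditional probability at least some $p>0$ \emph{uniformly in $s$}. For $\beta>0$ this is false in general.

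Your own estimate $\E(Y_{t+1}-Y_t\mid\F_t)\ge\gamma\rho t^{-\beta}-CX_t^{-1-\alpha}$ is positive only on $\{X_t\gtrsim t^{\beta/(1+\alpha)}\}$. So no fixed level $K$ works once $s\gg K^{(1+\alpha)/\beta}$. The shifted scale $c(t-s)^{\nu}$ does not help either, since $t^{-\beta}\le s^{-\beta}$ throughout the excursion.

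Concretely, take a walk with bounded increments whose drift equals $\rho X_t^\alpha t^{-\beta}$ on $\{X_t\ge a\}$, and set $N:=s^{\beta/(1+\alpha)}$. For $t\ge s$ and $X_t\le N$ the drift is at most $\rho X_t^{\theta}N^{-(1+\theta)}$ with $\theta=\min\{\alpha,0\}$. The supermartingale $h_N(X)$ from Lemma~\ref{lem:interval-exit} then shows that the probability of reaching $N$ before re-entering $[0,a]$, and hence of escaping during that excursion, is at most $X_s/(NG(1))=O(s^{-\beta/(1+\alpha)})\to0$. Thus $q(K)(1-\varepsilon(K))$ cannot be made independent of $s$, and your final conditional Borel--Cantelli step with a constant $p$ has nothing to stand on. Only when $\beta=0$, where the drift bound is time-homogeneous, is a uniform $p$ plausible.

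The missing idea is to accept a per-excursion success probability of order $T_j^{-\beta/(1+\alpha)}$ and prove that these still sum to infinity on the event of infinitely many returns. This is where the quantitative content of (H3) and the condition $2\beta<1+\alpha$ enter. In your write-up (H3) is used only to make $\zeta_j$ finite, which should itself be a warning sign: long sojourns in $[0,a]$ could make $\sum_j T_j^{-\beta/(1+\alpha)}$ converge.

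The paper handles this in three steps.
\begin{itemize}
\item It pairs the success bound $\P(G\mid\F_s)\ge c/N$ from Lemma~\ref{lem:one-trial}, applied with $N=2R(1+T_j)^{\beta/(1+\alpha)}$, with the duration bound $\E(\tau-s\mid\F_s)\le C(1+N^2\P(G\mid\F_s))$. If $\sum_j\P(G_j\mid\F_{T_j})<\infty$, these bounds and (H3) give $\sum_j(T_{j+1}-T_j)(1+T_j)^{-2\beta/(1+\alpha)}<\infty$. Because $2\beta/(1+\alpha)<1$, this forces $\sup_jT_j<\infty$, a contradiction.
\item It restricts to successes completed within time $\varepsilon T_j$, so that at the success time the walk is above the time-dependent threshold $R(1+t)^{\beta/(1+\alpha)}$ of Lemma~\ref{lem:escape-scale}.
\item It concludes via L\'evy's upward theorem.
\end{itemize}

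A secondary issue in your escape step: for $\alpha<0$ the conditional variance $X^{-2\alpha}$ of the $Y$-increments is not bounded by $t^{-2\alpha\nu}$ on $\{X_t\ge ct^{\nu}\}$, since you only have a lower bound on $X$. Stopping at successive doublings, as in Lemma~\ref{lem:escape-scale}, avoids this.
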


\subsection{Strategies of the proof} 

In our proof, we develop the recursive gambler's ruin approach used in \cite{MV}. However, we do not assume the Markov property and rely only on martingale and stopping time arguments. 

\textit{Recurrence.} Define the process
\[
Y_t:=\frac{X_t^2}{t},
\]
which was already used in the proof of Theorem~2 of \cite{MV}. Hypothesis \textup{(H1)} implies that the conditional second moments of the increments are uniformly bounded. When $2\beta\ge 1+\alpha$, we first notice that for all sufficiently large $t$, on $\{X_t\geq a\}$, 
\[\E(Y_{t+1}-Y_t\mid\F_t) \leq \frac{1}{t+1}\left( K+2\rho Y_t^{(1+\alpha)/2}-Y_t\right) \] 
for some deterministic constant $K<\infty$. Since $\alpha<1$, the right-hand side is negative for all sufficiently large $Y_t$. Using this supermartingale property, we show that there exist a deterministic constant $r<\infty$ such that, after every almost surely finite stopping time not smaller than $t_0$, there is almost surely a later time $s$ satisfying  $X_s^2\leq rs$. Fix such a time $s$. For a sufficiently large deterministic constant $N_0$, define $N:=\max\{N_0,2X_s\}$ and stop the process at 
\[ \tau:=\inf\{t>s:X_t<a\text{ or }X_t>N\}.\] 
The relation $2\beta\ge 1+\alpha$ implies that, for $\theta:=\min\{\alpha,0\}$, 
\[ \E(\Delta_{t+1}\mid\F_t) \leq cX_t^\theta N^{-(1+\theta)} \] 
on $\{s\leq t<\tau\}$, where $c$ is independent of $s$ and $N$.  Using this bound, we construct a supermartingale to show that $\tau<\infty$ almost surely and 
\[ \P(X_\tau<a\mid\F_s)\geq\nu\]
for a deterministic constant $\nu>0$ independent of $s$ and $N$. After $\tau$, the preceding estimate for $Y_t$ provides another finite time at which $X_t^2\leq rt$. Repeating this argument, we obtain almost surely finite stopping times $s_j\le \tau_j<s_{j+1}\le \tau_{j+1}$ such that the conditional probability of $\{X_{\tau_j}<a\}$  with respect to $\mathcal{F}_{s_j}$ is at least $\nu$. The probability that none of $k$ successive events occurs is at most $(1-\nu)^k$. Consequently, $X_t<a$ infinitely often almost surely.

\textit{Transience.} We first show that, after any almost surely finite stopping time $s\geq t_0$ with $X_s>a$ and for every sufficiently large $N$, the process has conditional probability at least $c/N$ of exceeding $N$ before returning to $[0,a]$. We also control the conditional expectation of the duration of this excursion. 

We next prove that there exists a deterministic constant $R<\infty$ such that
$$\P(X_t\to\infty\mid\F_s)\geq\frac12$$
whenever $X_s\geq R(1+s)^{\beta/(1+\alpha)}$. Starting from a position $x$, the accumulated positive drift over a suitable time interval is sufficient to make the process exceed $2x$, unless the centered increments have a large fluctuation. Hypothesis \textup{(H1)} controls these fluctuations. The condition $2\beta<1+\alpha$ makes their probabilities summable over successive doublings. Thus, with probability at least $1/2$, the process continues increasing and tends to infinity.

Let $D:=\{X_t\to\infty\}$. On $D^c$, the process must return to $[0,a]$ infinitely often. Otherwise, the preceding time estimate would give a sequence of stopping times $(u_k)_{k\ge 1}$ with $u_k\to \infty$ such that
$X_{u_j}>R(1+{u_j})^{\beta/(1+\alpha)}$. The preceding lower bound of the conditional probability of $D$ would then contradict that $\P(D\mid \mathcal{F}_{u_j})\to 0$ a.s. on $D^c$ by L\'evy's upward theorem.

After each return to $[0,a]$, we repeat the first argument, choosing
$N:=2R(1+s)^{\beta/(1+\alpha)}$,
where $s$ is the starting time of the new excursion. Hypothesis \textup{(H3)} controls the time spent in $[0,a]$, while the first estimate controls the time until the process either returns to $[0,a]$ or exceeds $N$. These bounds imply that the sum of the conditional probabilities of exceeding $N$ sufficiently quickly is infinite. By the conditional Borel--Cantelli, there exist infinitely many times $t$ at which
$X_t>R(1+t)^{\beta/(1+\alpha)}$.
At each such time, the conditional probability of $D$ is at least $1/2$, again contradicting L\'evy's upward theorem on $D^c$. Consequently, $X_t\to\infty$ almost surely.

\subsection*{AI disclosure} Microsoft 365 Copilot was used to assist with proofreading, technical computations, and refining the supermartingale argument in Step 2 of the proof of Lemma~\ref{lem:interval-exit}. The authors take full responsibility for all mathematical content.

\section{Recurrence}\label{sec:recurrence}

Recall that $Y_t=X_t^2/t$, for $t\ge1$. The first lemma asserts that there exists a deterministic constant $r$ sufficiently large such that, after every almost surely finite stopping time $s$, there is almost surely a later time $t\ge s$ satisfying  $Y_t\leq r$. Note that \textup{(H2)} is not required in this lemma.

\begin{lemma}\label{lem:Y-drift}
Assume $\alpha<1$, $\beta\ge \frac{1+\alpha}{2}$, $\rho>0$ and $a>0$. Suppose that \textup{(H1)} holds and the condition \eqref{eq:drift-upper} is satisfied a.s. on $\{X_t\ge a\}$ for every $t\ge t_0$ with some $t_0\ge 1$. 
Then there exists deterministic constant $r\ge 1$ such that for every almost surely finite stopping time $s\ge t_0$, we have a.s.
$$\sigma_s:=\inf\{t\ge s : Y_t\le r \}<\infty.$$
\end{lemma}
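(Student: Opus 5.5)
We will derive a one-step drift inequality for $Y_t=X_t^2/t$, turn it into a stopped supermartingale argument, and conclude by a nonnegative supermartingale convergence contradiction.

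\textbf{Step 1: the drift bound.} Since $X_{t+1}^2=X_t^2+2X_t\Delta_{t+1}+\Delta_{t+1}^2$, we have
\[
Y_{t+1}-Y_t=\frac{X_t^2+2X_t\Delta_{t+1}+\Delta_{t+1}^2}{t+1}-\frac{X_t^2}{t}
=\frac{1}{t+1}\Bigl(2X_t\Delta_{t+1}+\Delta_{t+1}^2-Y_t\Bigr).
\]
By (H1), applied with $\varepsilon=1$, we get $\E(\Delta_{t+1}^2\mid\F_t)\le L^2+1=:K$. On $\{X_t\ge a\}$ and for $t\ge t_0$, \eqref{eq:drift-upper} gives $2X_t\E(\Delta_{t+1}\mid\F_t)\le 2\rho X_t^{1+\alpha}t^{-\beta}$. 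Writing $X_t^{1+\alpha}=Y_t^{(1+\alpha)/2}t^{(1+\alpha)/2}$ and using $\beta\ge(1+\alpha)/2$ with $t\ge1$, this is at most $2\rho Y_t^{(1+\alpha)/2}$. Hence, on $\{X_t\ge a\}$,
\[
\E(Y_{t+1}-Y_t\mid\F_t)\le\frac{1}{t+1}\bigl(K+2\rho Y_t^{(1+\alpha)/2}-Y_t\bigr).
\]
Since $(1+\alpha)/2<1$, we can choose $r\ge1$ deterministic so large that $K+2\rho y^{(1+\alpha)/2}-y\le -y/2$ for all $y>r$. Note that $Y_t>r\ge1$ forces $X_t^2>t$, but not necessarily $X_t\ge a$. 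We therefore also enlarge $r$ so that $Y_t>r$ implies $X_t\ge a$, by taking $r\ge a^2$ (indeed $X_t^2>rt\ge r\ge a^2$). On $\{Y_t>r\}$ this yields $\E(Y_{t+1}-Y_t\mid\F_t)\le -Y_t/(2(t+1))\le -r/(2(t+1))$.

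\textbf{Step 2: supermartingale argument.} Fix a finite stopping time $s\ge t_0$ and consider the stopped process $Z_t:=Y_{(s+t)\wedge\sigma_s}$, which is nonnegative. Define the compensated process
\[
M_t:=Y_{(s+t)\wedge\sigma_s}+\sum_{u=s}^{(s+t)\wedge\sigma_s-1}\frac{r}{2(u+1)}.
\]
By Step 1, $M_t$ is a nonnegative supermartingale with respect to $(\F_{s+t})$; its integrability is handled by localizing, or by working conditionally on $\F_s$, using $\E Y_{t+1}\le \E Y_t+K$ to guarantee finiteness. Hence $M_t$ converges a.s. to a finite limit. On $\{\sigma_s=\infty\}$, the compensating sum $\sum_{u\ge s}r/(2(u+1))$ diverges because the harmonic series diverges, while $Y\ge0$, so $M_t\to\infty$. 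This contradicts convergence, and therefore $\P(\sigma_s=\infty)=0$.

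\textbf{Expected obstacle.} The main point of care is the integrability of $Y_s$ for a random $s$. The drift estimate itself is routine. To handle integrability, I would condition on $\F_s$ and work on the events $\{s=n, Y_n\le m\}$, which have finite expectation. Alternatively, one can avoid integrability altogether by applying the convergence theorem for nonnegative supermartingales conditionally, since nonnegative supermartingales with $\F_s$-measurable initial values still converge a.s. by a generalized conditional version of the argument.
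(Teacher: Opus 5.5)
Your proof is correct. The drift estimate is the same as the paper's: the same $L_0$ from (H1) with $\varepsilon=1$, and the same bound $\E(Y_{t+1}-Y_t\mid\F_t)\le -Y_t/(2(t+1))$ on $\{Y_t>r\}$. Your condition $r\ge a^2$ does the job of the paper's $rt_0\ge a^2$.

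You differ from the paper only in how you turn this drift bound into $\sigma_s<\infty$.
\begin{itemize}
\item The paper keeps the proportional drift and works first at a deterministic time $m$, with the multiplicative supermartingale $V_t=Y_t\1_{\{\sigma_m>t\}}$. Iterating $\E(V_{t+1}\mid\F_t)\le\bigl(1-\frac{1}{2(t+1)}\bigr)V_t$ gives $\E(V_t\mid\F_m)\le CY_m(m/t)^{1/2}$. Since $V_t\ge r$ on $\{\sigma_m>t\}$, conditional Markov yields $\P(\sigma_m>t\mid\F_m)\le CY_mr^{-1}(m/t)^{1/2}\to0$. A stopping time $s$ is then handled by splitting over $\{s=m\}$.
\item You keep only the additive bound $-r/(2(t+1))$ and compensate it with a harmonic sum. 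You then use the almost sure finiteness of the limit of a nonnegative supermartingale, which is incompatible with the divergent compensator on $\{\sigma_s=\infty\}$.
\end{itemize}

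What each buys: the paper's argument is fully elementary, with no convergence theorem and no real integrability issue. It also gives a quantitative tail, $\P(\sigma_m>t\mid\F_m)=O(t^{-1/2})$. Yours is only qualitative, but it is more robust. It needs only a drift at most $-\varepsilon_t$ on $\{Y_t>r\}$ with $\sum_t\varepsilon_t=\infty$, not a drift proportional to $Y_t$.

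Two details to tidy up.
\begin{itemize}
\item The drift inequality is stated at deterministic times. Using it at the random times $s+t$ needs the routine decomposition over $\{s+t=n\}$; this is valid because $s+t\ge t_0$.
\item For integrability, the clean fix is to multiply $M_t$ by $\1_{\{Y_s\le k\}}$, which is $\F_s$-measurable. This gives an $L^1$-bounded nonnegative supermartingale directly, and you then take the union over $k$. Your side remark $\E Y_{t+1}\le \E Y_t+K$ is unnecessary. It would also require controlling the cross term $2X_t\mu_t/(t+1)$, so it does not hold with that $K$ as written.
\end{itemize}
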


\begin{proof}
By (H1), there exists a sufficiently large deterministic $L_0<\infty$ such that, for each $t\ge 0$,
$$
\E\left[ \Delta_{t+1}^2\1_{\{|\Delta_{t+1}|>L_0\}} \mid \F_t \right]\leq1\quad\text{a.s.}
$$ Therefore, for every $t\geq0$, 
\begin{align}\label{cond.moment}
\E[\Delta_{t+1}^2\mid\F_t] &=
\E\left[ \Delta_{t+1}^2\1_{\{|\Delta_{t+1}|\leq L_0\}} \mid \F_t \right] + \E\left[ \Delta_{t+1}^2\1_{\{|\Delta_{t+1}|>L_0\}} \mid\F_t \right] \leq L_0^2+1\quad\text{a.s.}
\end{align}
Since $X_{t+1}=X_t+\Delta_{t+1}$, we have
\begin{equation}\label{eq:Y-drift}
\E(Y_{t+1}-Y_t\mid\F_t) =\frac{\E(\Delta_{t+1}^2\mid\F_t)
+2X_t\E(\Delta_{t+1}\mid\F_t)
-{X_t^2}/{t}}{t+1}.
\end{equation}
Since $\alpha<1$, we can choose $r\geq1$ sufficiently large such that $rt_0\ge a^2$ and
\begin{align}\label{r.choice}
    L_0^2+1+2\rho y^{(1+\alpha)/2}\leq\frac{y}{2}\quad\text{for all } y\geq r.
\end{align}
For $t\geq t_0$,  if $Y_t>r$, then $X_t=\sqrt{tY_t}\geq\sqrt{rt_0}\geq a$, so \eqref{eq:drift-upper} applies.
Combining \eqref{eq:Y-drift} together with \eqref{eq:drift-upper} and \eqref{cond.moment}, we thus have that on $\{Y_t>r\}$ with $t\geq t_0$,
\begin{align}  \nonumber
\E(Y_{t+1}-Y_t\mid\F_t)&\le \frac{L_0^2+1+2\rho X_t^{\alpha+1} t^{-\beta} - {X_t^2}/{t}}{t+1}
\\& \leq \frac{L_0^2+1+ 2\rho Y_t^{(1+\alpha)/2} -Y_t }{t+1}\leq -\frac{Y_t}{2(t+1)}\quad\text{a.s.},
\label{drift.Y2}
\end{align}
where in the second inequality we use $Y_t=X_t^2/t$ and $\beta\ge (1+\alpha)/2$, and in the last inequality we use \eqref{r.choice}.
First let $m\geq t_0$ be deterministic and define
$$
\sigma_m:=\inf\{t\geq m:Y_t\leq r\}\quad\text{and}\quad V_t:=Y_t \1_{\{\sigma_m>t\}}
\, \text{ for }t\geq m.
$$
On $\{\sigma_m\leq t\}$, we have $V_t=V_{t+1}=0$. On $\{\sigma_m>t\}$,
we have $V_{t+1} = Y_{t+1}\1_{\{Y_{t+1}>r\}} \leq Y_{t+1}.$
Using \eqref{drift.Y2}, we thus get
$$
\E(V_{t+1}\mid\F_t)\le \mathds{1}_{\{\sigma_m>t\}}\E( Y_{t+1}\mid \F_t )
\leq
\left(1-\frac{1}{2(t+1)}\right)V_t.
$$
Iterating the above inequality, we get
$$
\E(V_t\mid\F_m) \leq Y_m \prod_{k=m}^{t-1} \left(1-\frac{1}{2(k+1)}\right)\le C Y_m  \left(\frac{m}{t}\right)^{1/2}\quad\text{a.s.,}
$$
where $C<\infty$ is some deterministic constant.
On $\{\sigma_m>t\}$, we have $V_t\geq r$. Thus
$$
\P(\sigma_m>t\mid\F_m) \leq C \frac{Y_m}{r}\left(\frac{m}{t}\right)^{1/2}\quad\text{a.s.}$$
Taking $t\to\infty$, we obtain
$$
\P(\sigma_m=\infty\mid\F_m)=0\quad\text{a.s.}
$$

For an almost surely finite stopping time $s\ge t_0$, the same conclusion follows by applying the above argument on each event $\{s=m\}$ for  $m\geq t_0$. Therefore $\sigma_s<\infty$ almost surely.

\end{proof}

The next lemma concerns the time needed to exit an interval $[a,N]$ and the
probability of going below $a$ before exceeding $N$. This result extends \cite[Lemma~3]{MV} while the Markovian property is not required.

\begin{lemma}\label{lem:interval-exit}
Fix $\theta\in(-1,0]$, $c>0$, and $\gamma\in(0,1)$. Suppose that \textup{(H1)} and \textup{(H2)} hold. There exist deterministic constants $N_0<\infty$ and $\nu>0$ such that if $s$ is an almost surely finite stopping time and $N$ is an almost surely finite $\mathcal{F}_s$-measurable random variable satisfying the following conditions:
\begin{itemize}
    \item  $N\ge N_0$ and $X_s\in[a,\gamma N]$ a.s.,
    \item  for every deterministic $t\geq0$,
    \begin{equation}\label{eq:interval-drift}
\E(\Delta_{t+1}\mid\F_t)
\le
cX_t^\theta N^{-(1+\theta)}\quad\text{a.s. on $\{s\le t< \tau\}$,}
\end{equation}
with $\tau:=\inf\{u>s:X_u<a\ \text{or}\ X_u>N\}$,
\end{itemize} 
then a.s. 
$$\tau<\infty\quad \text{and} \quad\mathbb P(X_{\tau} <a\mid \mathcal{F}_s)\ge \nu.$$
\end{lemma}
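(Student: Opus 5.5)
The plan is to prove both conclusions with Lyapunov-type supermartingales on $\{s\le t<\tau\}$, working on each event $\{s=m\}$ as in Lemma~\ref{lem:Y-drift} so that $s$ and $N$ can be treated as fixed.

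\textbf{Step 1: $\tau<\infty$.} Consider $Z_t:=(N+L-X_{t\wedge\tau})^2$ or simply $X_{t\wedge\tau}^2$ corrected by the drift. From (H1), fix $L$ with $\E(\Delta_{t+1}^2\1_{\{|\Delta_{t+1}|>L\}}\mid\F_t)\le B/4$. On $\{s\le t<\tau\}$ we have $X_t\in[a,N]$ and, by \eqref{eq:interval-drift} with $\theta\le0$, $\E(\Delta_{t+1}\mid\F_t)\le c a^\theta N^{-(1+\theta)}$, which is at most $B/(8N)$ once $N\ge N_0$ (since $\theta>-1$). Then
\[
\E\bigl((X_{t+1}-N)^2-(X_t-N)^2\mid\F_t\bigr)=\E(\Delta_{t+1}^2\mid\F_t)+2(X_t-N)\E(\Delta_{t+1}\mid\F_t)\ge B-\tfrac{B}{4}.
\]
The cross term is handled because $X_t-N\le0$ and the drift is only bounded above. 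If the drift is negative, the product is nonnegative; if it is positive, it is at most $B/(8N)$ in size times $N$. So $(X_{t\wedge\tau}-N)^2-\tfrac{B}{2}(t\wedge\tau-s)$ is a submartingale. Its expected overshoot at $\tau$ is controlled by (H1), which bounds $\E((X_{\tau}-N)^2)\lesssim N^2+L^2+1$. This gives $\E(\tau-s\mid\F_s)\le CN^2$, and in particular $\tau<\infty$ a.s.

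\textbf{Step 2: $\P(X_\tau<a\mid\F_s)\ge\nu$.} I would use a concave increasing test function $f$ on $[0,\infty)$, with $f(x)=0$ for $x\le a$ roughly and $f(X_\tau)\ge f(N)$ on $\{X_\tau>N\}$, such that $f(X_{t\wedge\tau})$ is a supermartingale. Optional stopping then gives
\[
\P(X_\tau>N\mid\F_s)\,f(N)\le f(X_s)+\text{(undershoot error)},
\]
so $\P(X_\tau>N\mid\F_s)\le f(\gamma N)/f(N)+o(1)<1$. The natural choice is to make the negative second-order term absorb the drift. We need $f'(x)\,cx^\theta N^{-(1+\theta)}+\tfrac12 f''(x)\,B'\le0$, where $B'$ is a lower bound on the truncated second moment coming from (H2) and (H1). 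Taking $f(x)=1-e^{-\lambda x/N}$ (scaled to $[0,1]$ on $[0,N]$) gives $f''/f'=-\lambda/N$. The requirement becomes $c x^\theta N^{-(1+\theta)}\le \lambda B'/(2N)$, that is, $c(x/N)^\theta\le\lambda B'/2$. This fails as $x\to a$ when $\theta<0$, since $(x/N)^\theta$ blows up like $(a/N)^\theta$. Instead I would use $f$ with $f''/f'=-K x^{\theta}N^{-(1+\theta)}$, i.e.
\[
f'(x)=\exp\!\Bigl(-\tfrac{K}{1+\theta}(x/N)^{1+\theta}\Bigr),\qquad f(x)=\int_a^x f'(u)\,du .
\]
With this choice, $f(N)/N$ and $f(\gamma N)/N$ are comparable constants with $f(\gamma N)<f(N)$, independent of $N$; this is where $\theta>-1$ is used.

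\textbf{Main obstacle.} The hard part is the rigorous second-order Taylor bound with unbounded increments. I would split each increment on $\{|\Delta_{t+1}|\le L\}$ and its complement. On the small-jump part, $f''$ changes by a factor $1+O(L/N)$ over a window of size $L$, except near $x=a$ where $(x/N)^\theta$ varies. There I would use that $f'$ is essentially $1$ and its relative variation over $[x-L,x+L]$ is $O(LN^{-(1+\theta)}a^\theta)\to0$. On the large-jump part, concavity together with $f'\le1$ bounds the contribution by $\E(|\Delta|\1_{\{|\Delta|>L\}}\mid\F_t)\,f'$-scale, and (H1) makes this smaller than $\tfrac{B}{4}|f''|$ once $L$ is large. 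Doing this requires a careful choice of $L$, then $K$, then $N_0$. Finally, the undershoot below $a$ contributes at most $\E(|\Delta_\tau|)\lesssim$ const, which is $o(N)$. The overshoot only helps, since $f$ is capped by extending it as constant beyond $N$ while keeping concavity. This yields $\nu:=\tfrac12\bigl(1-f(\gamma N)/f(N)\bigr)$-type bounds uniform in $N\ge N_0$.
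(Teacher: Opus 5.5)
\textbf{There is a genuine gap in your Step 1.}

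The claim that $c\,a^\theta N^{-(1+\theta)}\le B/(8N)$ once $N\ge N_0$ is false. It is equivalent to $c\,a^{\theta}N^{-\theta}\le B/8$. When $\theta<0$ the left side grows like $N^{|\theta|}$, and when $\theta=0$ it would require $c\le B/8$, which is not assumed ($c>0$ is arbitrary).

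This is not just a lossy estimate. The cross term $2(X_t-N)\E(\Delta_{t+1}\mid\F_t)$ can genuinely equal $-2c(N-X_t)X_t^\theta N^{-(1+\theta)}$, which is of order $-c\,a^\theta N^{-\theta}$ near $X_t=a$. For example, take $\theta=0$, drift exactly $c/N$, and $\E(\Delta_{t+1}^2\mid\F_t)=B$. Then $\E\bigl((X_{t+1}-N)^2-(X_t-N)^2\mid\F_t\bigr)=B-2c(N-X_t)/N$, which is negative whenever $N-X_t>BN/(2c)$. So $(X_{t\wedge\tau}-N)^2-\frac B2(t\wedge\tau-s)$ is not a submartingale, and you obtain neither $\tau<\infty$ nor $\E(\tau-s\mid\F_s)\le CN^2$. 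Switching to $X_t^2$ fails in the other direction, since the drift has no lower bound.

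The hole also affects Step 2. Optional stopping only controls $\P(\tau<\infty,\,X_\tau>N\mid\F_s)$, and turning that into the lower bound on $\P(X_\tau<a\mid\F_s)$ requires $\tau<\infty$ a.s.

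\textbf{How to repair it.} Square the concave test function rather than $X$ itself; this is what the paper does.
\begin{itemize}
\item Once $f(X_{(s+k)\wedge\tau})$ is known to be a supermartingale, $U_k:=\mathrm{const}-f(X_{(s+k)\wedge\tau})$ is a bounded nonnegative submartingale. Hence $\E(U_{k+1}^2-U_k^2\mid\F_{s+k})\ge\E((U_{k+1}-U_k)^2\mid\F_{s+k})$.
\item Cap $f$ at $N+L$ instead of at $N$. Then on $\{s+k<\tau,\ |\Delta_{s+k+1}|\le L\}$ both endpoints lie where $f$ is uncapped, and there $f'\ge e^{-2K/(1+\theta)}$ because $N\ge L$.
\item The truncated second-moment bound from (H1)--(H2) then gives a deterministic positive lower bound for $\E((U_{k+1}-U_k)^2\mid\F_{s+k})$ on $\{s+k<\tau\}$.
\item Since $U=O(N)$ after localizing on $\{N\le m\}$, this yields $\E(\tau-s\mid\F_s)\lesssim N^2$.
\end{itemize}

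\textbf{On Step 2.} Otherwise it is the paper's argument with essentially the same $f$; integrating from $a$ instead of $0$ only costs an $O(1)$ term. Two simplifications:
\begin{itemize}
\item On $\{|\Delta_{t+1}|>L\}$ nothing needs to be controlled. The drift hypothesis bounds the full $\E(\Delta_{t+1}\mid\F_t)$, and concavity gives $f(X_{t+1})-f(X_t)\le f'(X_t)\Delta_{t+1}$. Your proposed comparison of $\E(|\Delta_{t+1}|\1_{\{|\Delta_{t+1}|>L\}}\mid\F_t)$ with $\frac B4|f''|$ could not be made uniform in $N$, because $|f''|$ is of order $K/N$ near $x=N$.
\item On small jumps you only need that $|f''|$ is nonincreasing and that $(x+L)^\theta\ge(1+L/a)^\theta x^\theta$ for $x\ge a$. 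The resulting constant is absorbed into $K$.
\end{itemize}
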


\begin{proof}
Let $B$ be the constant in (H2). By (H1), there exists a deterministic $L\geq1$ such that, for every $t\geq0$, 
\begin{equation}\label{drift.upper}
    \mathbb E\left(\Delta_{t+1}^2\mathbf 1_{\{|\Delta_{t+1}|>L\}} \mid \mathcal F_t \right) \leq \frac{B}{2}
\end{equation} 
almost surely. By \eqref{drift.upper} and (H2), we have
that \begin{equation}
\label{drift.lower}
\mathbb E\left(\Delta_{t+1}^2\mathbf 1_{\{|\Delta_{t+1}|\leq L\}} \mid \mathcal F_t \right) \geq \frac{B}{2}\quad \text{a.s. on $\{X_t\geq a\}$}.\end{equation}
Define
\begin{equation}\label{eq:interval-constants}
\lambda:=\left(1+\frac{L}{a}\right)^\theta, \quad C:=\frac{8c}{(1+\theta)\lambda B}, \quad G(v):=\int_0^v e^{-Cw^{1+\theta}}\,{\rm d}w, \text{ for }v\geq0.
\end{equation}
and choose a deterministic $N_0\geq L$ sufficiently large such that \begin{equation}\label{eq:interval-threshold} C\left(\frac{L}{N_0}\right)^{1+\theta}\leq\log 2.\end{equation}
For each $n\geq N_0$, define 
$$g_n(y):=nG(y/n) \quad \text{for }y\geq0.$$ 
Then 
\begin{equation}\label{eq:g-derivatives}
g_n'(y)=e^{-C(y/n)^{1+\theta}}>0,\quad g_n''(y)=-C(1+\theta)y^\theta n^{-(1+\theta)}g_n'(y)\quad \text{for } y>0.
\end{equation}
Thus $g_n$ is increasing and concave, $g_n(0)=0$, and $0\leq g_n(y)\leq y$. Since $\theta\in(-1,0]$, the function $g_n''$ is integrable on every bounded interval in $(0,\infty)$, and $g_n'$ extends to an absolutely continuous function on every bounded interval in $[0,\infty)$. Define 
$$h_n(y):=g_n\bigl(y\wedge(n+L)\bigr) \quad\text{for }y\geq0.$$
The function $h_n$ is nonnegative, increasing, concave, and bounded by $g_n(n+L)$. 

\smallskip

\emph{Step 1.} Fix an integer $m\geq1$. For each integer $k\geq0$, let
$$Z_k^{(m)}:=\mathds{1}_{\{N\le m\}} h_N(X_{(s+k)\wedge\tau}).$$
In this step we prove that $(Z_k^{(m)})_{k\ge 0}$ is a bounded supermartingale with respect to $(\mathcal F_{s+k})_{k\geq0}$.

Note that $\{N\leq m\}\in\mathcal F_s$. On $\{N\leq m\}$, we have 
$$0\leq h_N(y)\leq g_N(N+L)\leq N+L\leq m+L.$$ 
Fix $k\geq0$. On the event $\{s+k<\tau\}$, we have $X_{s+k}<N+L$, the function $h_N$ is differentiable at $X_{s+k}$ and $h_N'(X_{s+k})=g_N'(X_{s+k})$. By the concavity of $h_N$, we thus have 
\begin{equation}
    \label{concave.inq} h_N(X_{s+k+1})-h_N(X_{s+k}) \leq g_N'(X_{s+k})\Delta_{s+k+1}\quad\text{on $\{s+k<\tau\}$}.
\end{equation}
On the event $\{s+k<\tau\}\cap\{|\Delta_{s+k+1}|\leq L\}$, we note that $0\leq X_{s+k}+v\Delta_{s+k+1}\leq N+L$ for every $v\in[0,1]$. Thus, on the same event, we have $h_N(X_{s+k}+v\Delta_{s+k+1})=g_N(X_{s+k}+v\Delta_{s+k+1})$ for every $v\in[0,1]$. By the absolute continuity of $g_N'$, we have that on $\{s+k<\tau\}\cap\{|\Delta_{s+k+1}|\leq L\}$,
\begin{equation}\label{eq:taylor-g}
g_N(X_{s+k+1})-g_N(X_{s+k}) = g_N'(X_{s+k})\Delta_{s+k+1} + \Delta_{s+k+1}^2\int_0^1(1-v)g_N''(X_{s+k}+v\Delta_{s+k+1})\,{\rm d}v.\end{equation}
By \eqref{eq:g-derivatives}, we note that $|g_N''(y)|$ is non-increasing for $y\in (0,\infty)$.
Since $X_{s+k}+v\Delta_{s+k+1}\leq X_{s+k}+L$ for every $v\in[0,1]$, we have $|g_N''(X_{s+k}+v\Delta_{s+k+1})|\geq|g_N''(X_{s+k}+L)|$ for a.e. $v\in[0,1]$. Since $g_N''\leq0$ on $(0,\infty)$, it follows from \eqref{eq:taylor-g} that on $\{s+k<\tau\}\cap\{|\Delta_{s+k+1}|\leq L\}$,
$$g_N(X_{s+k+1})-g_N(X_{s+k}) \leq g_N'(X_{s+k})\Delta_{s+k+1} - \frac12\Delta_{s+k+1}^2|g_N''(X_{s+k}+L)|.$$
Combining this inequality with \eqref{concave.inq}, we obtain that on $\{s+k<\tau\}$,
\begin{equation}\label{eq:taylor-upper}
h_N(X_{s+k+1})-h_N(X_{s+k}) \leq g_N'(X_{s+k})\Delta_{s+k+1} - \frac12\Delta_{s+k+1}^2\mathbf 1_{\{|\Delta_{s+k+1}|\leq L\}}|g_N''(X_{s+k}+L)|.
\end{equation}

For $x\geq a$ and $\theta\in(-1,0]$, we note that
$(x+L)^\theta
\geq\left(1+{L}/{a}\right)^\theta x^\theta = \lambda x^\theta$, $(x+L)^{1+\theta}\leq x^{1+\theta}+L^{1+\theta}$
and thus
\begin{align*}
g_N'(x+L) &= \exp\left[-C\left(\frac{x+L}{N}\right)^{1+\theta}\right]\geq g_N'(x)\exp\left[-C\left(\frac{L}{N}\right)^{1+\theta}\right]\geq \frac12g_N'(x),
\end{align*}
where the last inequality follows from \eqref{eq:interval-threshold}.
Hence, by \eqref{eq:g-derivatives},
\begin{equation}\label{eq:gsecond-comparison}
|g_N''(X_{s+k}+L)| \geq \frac12C(1+\theta)\lambda X_{s+k}^\theta N^{-(1+\theta)}g_N'(X_{s+k})\quad\text{on $\{s+k<\tau\}$}.
\end{equation}
Taking conditional expectations of \eqref{eq:taylor-upper} with respect to $\mathcal F_{s+k}$ and using \eqref{eq:interval-drift}, \eqref{drift.lower}, and
\eqref{eq:gsecond-comparison}, we obtain
\begin{align*} \mathbb E\left(h_N(X_{s+k+1})-h_N(X_{s+k})\mid\mathcal F_{s+k}\right) &\leq g_N'(X_{s+k})X_{s+k}^\theta N^{-(1+\theta)} \left[c-\frac18C(1+\theta)\lambda B\right] =0 \end{align*} almost surely on $\{N\leq m\}\cap\{s+k<\tau\}$.
It follows that, for each fixed $m\ge 1$,
$(Z_k^{(m)})_{k\geq0} $ is a bounded supermartingale with respect to $(\mathcal F_{s+k})_{k\geq0}$.

\smallskip

\emph{Step 2.} In this step we prove that $\tau<\infty$ almost surely. Define 
$$U_k^{(m)} :=\mathds{1}_{\{N\le m\}} g_N(N+L)-Z_k^{(m)}.$$
Then $(U_k^{(m)})_{k\geq0}$ is a nonnegative submartingale, and $0\leq U_k^{(m)}\le m+L$.
On $\{N\le m\}\cap\{s+k<\tau\}\cap\{|\Delta_{s+k+1}|\leq L\}$, we have
$0\leq X_{s+k}\leq N$ and $0\leq X_{s+k+1}\leq N+L.$ On the same event, $h_N$ is equal to $g_N$ at both $X_{s+k}$ and $X_{s+k+1}$. Therefore, by the mean-value theorem, we have
$$|U_{k+1}^{(m)}-U_k^{(m)}| = \mathds{1}_{\{N\le m\}} |g_N(X_{s+k+1})-g_N(X_{s+k})| \geq \mathds{1}_{\{N\le m\}} \left( \min_{0\leq z\leq N+L}g_N'(z) \right) |\Delta_{s+k+1}|$$
on $\{s+k<\tau\}\cap\{|\Delta_{s+k+1}|\leq L\}$. Since $g_N'$ is decreasing, $N\geq L$, and $\theta\leq0$, we notice that 
$\min_{0\leq z\leq N+L}g_N'(z) =g_N'(N+L) =e^{-C((N+L)/N)^{1+\theta}} \geq e^{-2C}$.
Therefore, by \eqref{drift.lower},
$$\mathbb E\left((U_{k+1}^{(m)}-U_k^{(m)})^2\mid\mathcal F_{s+k}\right) \geq \frac{B}{2}e^{-4C}\mathbf 1_{\{s+k<\tau,\, N\le m\}}.$$
Using the identity $(U_{k+1}^{(m)})^2-(U_k^{(m)})^2 =2U_k^{(m)}(U_{k+1}^{(m)}-U_k^{(m)})+(U_{k+1}^{(m)}-U_k^{(m)})^2$ and the fact that $(U_k^{(m)})_{k\ge 0}$ is a nonnegative submartingale, we thus get
\begin{equation}\label{eq:U-square-drift}
\mathbb E\big((U_{k+1}^{(m)})^2-(U_k^{(m)})^2\mid\mathcal F_{s+k}
\big) \geq \frac{B}{2}e^{-4C}\mathbf 1_{\{s+k<\tau,\, N\le m\}}.
\end{equation}
Summing \eqref{eq:U-square-drift} for $k=0,\ldots,n-1$, taking conditional expectation with respect to
$\F_{s}$ and using the fact that $U_n^{(m)}\le  m+ L$, we obtain
$$ \mathbb E\left(n\wedge(\tau-s)\mid\mathcal F_s\right) \leq \frac{2e^{4C}(m+L)^2}{B}\quad\text{on }\{N\leq m\}.$$
Taking $n\to\infty$ and applying the monotone convergence theorem, we have almost surely,
\begin{equation*} \mathbb E(\tau-s\mid\mathcal F_s) \leq  \frac{2e^{4C}(m+L)^2}{B} \quad\text{on }\{N\leq m\}.
\end{equation*}
Hence $\tau<\infty$ almost surely on $\{N\leq m\}$. Since $\{N<\infty\}= \bigcup_{m\geq1}\{N\leq m\}$ has probability one, $\tau<\infty$ almost surely.

\smallskip

\emph{Step 3.} In this step we show that 
\begin{equation}\label{eq:reach-lb}\mathbb P(X_\tau<a\mid\mathcal F_s) \geq \nu:=1-\frac{G(\gamma)}{G(1)}>0.\end{equation} 
Since $\tau<\infty$ almost surely, $Z_k^{(m)}\to \mathds{1}_{\{N\le m\}}h_N(X_\tau)$ almost surely as $k\to\infty$.
By the bounded convergence theorem and the supermartingale property, we have 
$$\mathbb E(h_N(X_\tau)\mid\mathcal F_s) \leq h_N(X_s)\quad\text{a.s. on }\{N\leq m\}.$$
Since $X_s\leq\gamma N$, we note that $h_N(X_s)=g_N(X_s)\leq g_N(\gamma N)=NG(\gamma)$. 
Furthermore, on $\{X_\tau>N\}$,  we have a.s.
$h_N(X_\tau)\geq h_N(N)=g_N(N)=NG(1).$
Hence, 
$$NG(1) \mathbb P(X_\tau>N\mid\mathcal F_s) \leq NG(\gamma)\quad\text{a.s. on }\{N\leq m\}.$$
Since $N\geq N_0>0$, we thus get that for each $m\ge 1$,
$$\mathbb P(X_\tau>N\mid\mathcal F_s) \leq\frac{G(\gamma)}{G(1)}\quad\text{ a.s. on }\{N\leq m\}.$$
Since $N<\infty$ a.s., we obtain a.s.
\begin{equation*}
     \mathbb P(X_\tau>N\mid\mathcal F_s) \leq \frac{G(\gamma)}{G(1)}.
\end{equation*}
Since $\tau=\inf\{t>s:X_t<a\text{ or }X_t>N\}$ is almost surely finite, the events $\{X_\tau<a\}$ and $\{X_\tau>N\}$ are disjoint and their union has probability one. Hence, \eqref{eq:reach-lb} is verified.
\end{proof}

\begin{proof}[Proof of Theorem~\ref{thm:main}]
Let $r$ be the constant given by Lemma~\ref{lem:Y-drift}.
Let $N_0$ and $\nu$ be constants as in Lemma~\ref{lem:interval-exit} applied with parameters $c:=\rho(4r)^\beta$, $\theta:=\min\{\alpha,0\}$, and $\gamma:=1/2$. Set $T_0= \max\{t_0, N_0^2\}$.

We define two sequences of stopping times $(s_j)_{j\ge 1}$ and $(\tau_j)_{j\ge 1}$ recursively as follows. We use the convention $\inf\varnothing=\infty$. Let
$$
s_1:=\inf\{t\geq T_0:Y_t\leq r\}.
$$
Suppose that $s_j$ has been defined. On $\{s_j=\infty\}$, set $\tau_j:=\infty$. On $\{s_j<\infty\}$, set
\begin{align}\nonumber
&N_j:=N_0,\quad \tau_j:=s_j \quad\text{on $\{X_{s_j}<a\}$};\\
\label{eq:exit-time}
&N_j:=\max\{N_0,2X_{s_j}\},\quad \tau_j:=\inf\{t>s_j:X_t<a\text{ or }X_t>N_j\}\quad\text{on $\{X_{s_j}\geq a\}$}
\end{align}
Finally,  set $$s_{j+1} := \begin{cases} \inf\{t\geq\tau_j+1:Y_t\leq r\}, &\text{if }\tau_j<\infty,\\ \infty, &\text{if }\tau_j=\infty. \end{cases}$$

We next prove by induction that both sequences of stopping times defined above are finite almost surely and estimate the probability of $X_{\tau_j}<a$ conditional on $\mathcal{F}_{s_j}$.  

If $Y_{T_0}\leq r$, then $s_1=T_0$. Otherwise, by Lemma~\ref{lem:Y-drift}, we have $s_1<\infty$ almost surely. Suppose that $s_j$ is a.s. finite. 
Define
\begin{equation}\label{eq:Hj-definition}
H_j:=\{\tau_j<\infty,\ X_{\tau_j}<a\}.\end{equation}
On the event $\{X_{s_j}<a\}$, we have a.s. $\tau_j=s_j<\infty$ and hence the event $H_j$ occurs with probability 1.
We now work on the case $X_{s_j}\geq a$. By the definition of $N_j$ in \eqref{eq:exit-time}, we note that
$X_{s_j}\in[a,N_j/2]$ on $\{X_{s_j}\geq a\}$. Moreover, since $Y_{s_j}\leq r$ and $s_j\geq T_0\geq N_0^2$, we have that a.s. on $\{X_{s_j}\geq a\}$,
\begin{align*}
\frac{N_j^2}{s_j}
&=\max\left\{\frac{N_0^2}{s_j},\frac{4X_{s_j}^2}{s_j}\right\}\leq
4r.
\end{align*}
Hence, for each deterministic $t\ge 0$,  we have a.s. on $\{X_{s_j}\geq a\}\cap\{s_j\leq t<\tau_j\}$,
\begin{equation}
    \label{eq:time-comparison} 
    \frac{N_j^2}{t}\leq \frac{N_j^2}{s_j}\leq 4r.
\end{equation}
Using \eqref{eq:drift-upper}, \eqref{eq:time-comparison}  and $2\beta\ge 1+\alpha$, we thus have
\begin{align*}
\E(\Delta_{t+1}\mid\F_t) &\leq \rho\frac{X_t^\alpha}{t^\beta}=\rho\left(\frac{N_j^2}{t}\right)^\beta X_t^\alpha N_j^{-2\beta}\leq cX_t^\alpha N_j^{-(1+\alpha)}
\end{align*}
almost surely on $\{X_{s_j}\geq a\}\cap\{s_j\leq t<\tau_j\}$. If $\alpha<0$, then $\theta=\alpha$, and the last expression is
$cX_t^\theta N_j^{-(1+\theta)}.$
If $\alpha\geq0$, then $\theta=0$, and $X_t\leq N_j$ for $t<\tau_j$, and hence
$$
X_t^\alpha N_j^{-(1+\alpha)}
\leq
N_j^{-1}
=
X_t^\theta N_j^{-(1+\theta)}.
$$
Thus, in either case, we obtain 
$$
\E(\Delta_{t+1}\mid\F_t) \leq cX_t^\theta N_j^{-(1+\theta)}\quad\text{a.s. on $\{X_{s_j}\geq a\}\cap\{s_j\leq t<\tau_j\}$.}
$$
Applying
Lemma~\ref{lem:interval-exit}, we have $\tau_j<\infty$ almost surely and
\begin{equation}\label{eq:Hj-bound}
\P(H_j\mid\F_{s_j})\geq\nu.
\end{equation}
If $Y_{\tau_j+1}\leq r$, then $s_{j+1}=\tau_j+1$. Otherwise, by Lemma~\ref{lem:Y-drift}, we have $s_{j+1}<\infty$ almost surely.
By induction, all $s_j$ and $\tau_j$ are finite almost surely. Moreover,
$$
s_j\leq\tau_j<s_{j+1}\leq\tau_{j+1}.
$$

For integers $J\geq1$ and $k\geq0$, note that $H_J,\ldots,H_{J+k-1}\in \mathcal F_{s_{J+k}}$. By \eqref{eq:Hj-bound}, we thus have
\begin{align*}
\P\Big(\bigcap_{j=J}^{J+k}H_j^c\Big) &= \E\left[ \1_{\bigcap_{j=J}^{J+k-1}H_j^c} \P(H_{J+k}^c\mid\F_{s_{J+k}}) \right] \leq (1-\nu) \P\Big(\bigcap_{j=J}^{J+k-1}H_j^c\Big).
\end{align*}
Iterating the above inequality, we obtain 
$\P\Big(\bigcap_{j=J}^{J+k}H_j^c\Big) \leq (1-\nu)^{k+1}$.
Taking $k\to\infty$ and using the continuity of probability measure, we get
$\P\Big(\bigcap_{j=J}^{\infty}H_j^c\Big)=0,$
yielding that $H_j$ occurs infinitely often almost surely. Hence
$$
\mathbb P(X_t<a\text{ infinitely often})=1.
$$
\end{proof}

\section{Transience}

\begin{lemma}\label{lem:one-trial} Assume that \textup{(H1)}, \textup{(H2)}, and
\eqref{ineq:drift2} hold. 
 There exist deterministic constants $N_0<\infty$, $c>0$ and $C<\infty$ such that the following holds. Let $s\geq t_0$ be an almost surely finite stopping time satisfying $X_s>a$, and let $N$ be an almost surely finite $\F_s$-measurable random variable satisfying $N\geq N_0$. Then there exist an almost surely finite stopping time $\tau\geq s$ and an event $G\in\F_\tau$ such that 
 \begin{align}
 &X_t>a\quad\text{for every }s\leq t<\tau, \label{eq:terminal}\\
& G\subseteq\{X_\tau>N\}, G^c\subseteq\{X_\tau\leq a\}, \label{eq:inclusion} \\
 \label{eq:trial-success} &\P(G\mid\F_s)\geq\frac{c}{N}, \\
  \label{eq:trial-duration} &\E(\tau-s\mid\F_s) \leq C\left(1+N^2\P(G\mid\F_s)\right)
  \end{align} almost surely.    
\end{lemma}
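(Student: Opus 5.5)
Take $\tau:=\inf\{t>s: X_t\le a \text{ or } X_t>N\}$ (so that \eqref{eq:terminal} holds automatically, since $X_s>a$) and $G:=\{X_\tau>N\}$. The inclusions \eqref{eq:inclusion} are then immediate once we know $\tau<\infty$ a.s. The lower bound on the drift in \eqref{ineq:drift2} gives $\E(\Delta_{t+1}\mid\F_t)\ge 0$ on $\{s\le t<\tau\}$, because $X_t>a$ there. Thus, after a truncation, $X$ stopped at $\tau$ behaves like a submartingale with conditional variance at least $B/2$ by (H2) and \eqref{drift.lower}. This suggests mimicking Steps 1--3 of Lemma~\ref{lem:interval-exit} with the roles of up and down reversed.

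\emph{Step 1: success probability.} We need a lower bound on $X_\tau$ that handles overshoot below $a$ while staying bounded. Consider $W_k:=\mathds 1_{\{N\le m\}}\,(X_{(s+k)\wedge\tau}\wedge(N+L))$. The function $x\mapsto x\wedge(N+L)$ is concave, so it does not immediately give a submartingale. Instead, use (H1) with $\varepsilon$ small to bound
\[
\E\bigl(\Delta_{t+1}\1_{\{\Delta_{t+1}>L\}}\mid\F_t\bigr)\le \varepsilon/L,
\]
so that the truncation loss per step is $O(\varepsilon/L)$. This loss must be paid per unit time, and it is controlled by Step 2 through $\E(\tau-s)$. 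A cleaner route is the following. Optional stopping for the true submartingale $X_{(s+k)\wedge\tau}$ gives
\[
X_s\le \E(X_\tau\mid\F_s)\le a+\E\bigl(X_\tau\1_G\mid\F_s\bigr).
\]
The overshoot $\E\bigl((X_\tau-N)^+\1_G\mid\F_s\bigr)$ can be bounded by summing, over steps before $\tau$, the expected jump excess beyond $L$. However, this again couples to the duration. I will therefore instead apply optional stopping to $\min(X,\,N+L')$ together with an explicit overshoot bound: conditional on the pre-jump state, $\E\bigl((\Delta_{t+1}-L)^+\mid\F_t\bigr)\le \varepsilon/L$. This yields $\E(X_\tau\1_G\mid\F_s)\le (N+L)\P(G\mid\F_s)+ (\varepsilon/L)\,\E(\tau-s\mid\F_s)$. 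Combined with $X_s>a$ this is not yet enough, since $X_s-a$ may be tiny. So first I use (H2) to get a positive one-step probability of moving up by a fixed amount $\delta$. Specifically, the Paley--Zygmund inequality applied to the truncated square together with the nonnegative drift gives $\P(\Delta_{s+1}\ge\delta\mid\F_s)\ge p>0$. This secures a start at $X\ge a+\delta$ with conditional probability $p$, and produces $\P(G\mid\F_s)\ge c/N$ after absorbing the duration term.

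\emph{Step 2: duration.} Apply the quadratic argument of Step 2 in Lemma~\ref{lem:interval-exit} to $U_k:=X_{(s+k)\wedge\tau}$, which is now a submartingale directly, with per-step square increment at least $B/2$ on $\{|\Delta|\le L\}$. Using the identity
\[
U_{k+1}^2-U_k^2=2U_k(U_{k+1}-U_k)+(U_{k+1}-U_k)^2
\]
and $U_k\ge0$, we get $\E(U_n^2-U_0^2\mid\F_s)\ge \tfrac B2\,\E(n\wedge(\tau-s)\mid\F_s)$. Then bound $\E(X_\tau^2\mid\F_s)\le a^2+\E(X_\tau^2\1_G\mid\F_s)$ and estimate $\E(X_\tau^2\1_G\mid\F_s)\lesssim N^2\P(G\mid\F_s)+{}$overshoot terms, using (H1) to make the overshoot contribution at most $\tfrac B4\,\E(\tau-s\mid\F_s)$. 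That contribution is absorbed on the left, giving \eqref{eq:trial-duration}, with the constant $C$ accounting for the $a^2$ and $L^2$ terms. Finiteness of $\tau$ follows along the way, exactly as before, by working on $\{N\le m\}$ and letting $n\to\infty$.

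\emph{Main obstacle.} The main difficulty is controlling the overshoot $X_\tau-N$ on $G$ in second moment when only (H1)'s uniform integrability of squares is available, not bounded jumps. The plan is to estimate $\E\bigl((X_\tau-N)^2\1_G\bigr)$ by a sum over times $t<\tau$ of $\E\bigl(\Delta_{t+1}^2\1_{\{|\Delta_{t+1}|>L\}}\mid\F_t\bigr)$, plus $(N+L)^2$ on $\{|\Delta_\tau|\le L\}$, choosing $\varepsilon\le B/8$ so that the sum is at most a small multiple of $\E(\tau-s\mid\F_s)$. The same absorption trick then handles the first-moment version in Step 1.
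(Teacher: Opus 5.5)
\textbf{Verdict: there is a genuine gap in Step 1.} With a fixed truncation level the overshoot term cannot be absorbed, and you only get $\P(G\mid\F_s)\gtrsim N^{-2}$ instead of the required $c/N$.

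\textbf{What works.} Your overall architecture matches the paper's: a launch step to a fixed distance above $a$, the $X^2$-submartingale with drift at least $B$ for the duration, and optional stopping for $X$ for the success probability. Step~2 works even with a fixed truncation level $L$. Three smaller points:
\begin{itemize}
\item In Step~2 the $a^2$ term cancels against $-X_s^2$, since $X_s>a$, so no additive constant is needed.
\item Because $X_{(s+k)\wedge\tau}$ is unbounded (overshoot), finiteness of $\tau$ is not literally the argument of Lemma~\ref{lem:interval-exit}. You need the truncated-time version with $N^2\P(\tau>s+n\mid\F_s)\le \frac{N^2}{n}\E(n\wedge(\tau-s)\mid\F_s)$.
\item Your launch bound $\P(\Delta_{s+1}\ge\delta\mid\F_s)\ge p$ is true, but Paley--Zygmund plus the drift is not enough. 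If the truncated variance comes from downward moves, the drift only forces $\E(\Delta_{s+1}^+\mid\F_s)\gtrsim B/L$, and you need (H1) again to keep this upward mass out of rare huge jumps. The paper avoids this by using $\Delta_{t+1}^-\le X_t<a+1$ near $a$.
\end{itemize}

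\textbf{Why Step~1 fails.} With a fixed truncation level, your first-moment inequality from time $s+1$ is $\delta\le (N+L)\P(G\mid\F_{s+1})+(\varepsilon/L)\,\E(\tau-s-1\mid\F_{s+1})$. Unlike the second-moment inequality, it has no duration term on the left to absorb into. The only control available is the Step~2 bound $\E(\tau-s-1\mid\F_{s+1})\le C\bigl(1+N^2\P(G\mid\F_{s+1})\bigr)$. Substituting gives $\delta\le \frac{\varepsilon C}{L}+\bigl(N+L+\frac{\varepsilon C}{L}N^2\bigr)\P(G\mid\F_{s+1})$. Since $\varepsilon$ and $L$ are deterministic and $N$ is unbounded, the overshoot error of order $N^2\P(G)$ dominates the main term of order $N\P(G)$.

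\textbf{The missing idea.} The first-moment overshoot must be truncated at a level proportional to $N$. The paper chooses $N_0$ via (H1) so that \eqref{eq:large-jump} gives $\E(\Delta_{t+1}^2\1_{\{|\Delta_{t+1}|>N/2\}}\mid\F_t)\le B/100$ for $N\ge N_0$. It then uses $|z|\le\frac2N z^2$ on $\{|z|>N/2\}$, so the overshoot term becomes $\frac{3B}{50N}\E(\tau_n-\lambda\mid\F_\lambda)$. Combined with \eqref{eq:growth-duration}, this is only $\frac{3}{10}N\P(X_\tau>N\mid\F_\lambda)$, the same order as the main term, and yields $h<2N\P(X_\tau>N\mid\F_\lambda)$.

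\textbf{An alternative fix.} You could keep your fixed $L$ and use Cauchy--Schwarz: $\E(X_\tau\1_G\mid\F_{s+1})\le \E(X_\tau^2\1_G\mid\F_{s+1})^{1/2}\,\P(G\mid\F_{s+1})^{1/2}$. The second-moment estimate $\E(X_\tau^2\1_G\mid\F_{s+1})\lesssim 1+N^2\P(G\mid\F_{s+1})$, which you essentially derive in Step~2, then also gives a lower bound of order $1/N$.
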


\begin{proof} 
By \textup{(H1)}, choose a deterministic $L\geq1$ sufficiently large that, for every $t\geq0$, $$\E\left( \Delta_{t+1}^2 \1_{\{|\Delta_{t+1}|>L\}} \mid\F_t \right) \leq \min\left\{ \frac{B}{4}, \frac{B}{4(a+1)} \right\}\quad\text{a.s.}$$
Set $$h:=\min\left\{\frac14,\frac{B}{4(L+a+1)}\right\},\quad p:=\frac{B}{4L(L+a+1)}.$$

\textit{Step 1.}  In this step we prove that
for every $t\geq t_0$, \begin{equation}\label{eq:upward-jump} \P(\Delta_{t+1}\geq h\mid\F_t)\geq p \quad\text{a.s. on $\{a<X_t<a+2h\}$}. \end{equation}

Fix $t\geq t_0$. By the choice of $L$ and \textup{(H2)}, $$\E\left( \Delta_{t+1}^2 \1_{\{|\Delta_{t+1}|\leq L\}} \mid\F_t \right) \geq \frac{3B}{4}\quad \text{a.s. on $\{X_t>a\}$.}$$
Moreover, by \eqref{ineq:drift2}, we note that a.s. on $\{X_t>a\}$,
$$\E\left( -\Delta_{t+1}\1_{\{\Delta_{t+1}<0\}} \mid\F_t \right) \leq \E\left( \Delta_{t+1}\1_{\{\Delta_{t+1}>0\}} \mid\F_t \right).$$
On $\{X_t<a+1\}$, we have $-\Delta_{t+1}\1_{\{\Delta_{t+1}<0\}}\leq X_t\leq a+1$, and thus
\begin{align*} \Delta_{t+1}^2\1_{\{|\Delta_{t+1}|\leq L\}} &\leq L\Delta_{t+1}\1_{\{0<\Delta_{t+1}\leq L\}} +(a+1)(-\Delta_{t+1})\1_{\{-L\leq\Delta_{t+1}<0\}}. \end{align*} 
Hence, a.s. on $\{a<X_t<a+1\}$,
\begin{align*}
\frac{3B}{4}
&\leq \E\left( \Delta_{t+1}^2\1_{\{|\Delta_{t+1}|\leq L\}} \mid\F_t \right)\\
&\leq (L+a+1)\E\left( \Delta_{t+1}\1_{\{0<\Delta_{t+1}\leq L\}} \mid\F_t \right)
+\frac{a+1}{L}\E\left( \Delta_{t+1}^2\1_{\{|\Delta_{t+1}|>L\}} \mid\F_t \right)\\
&\leq (L+a+1)\E\left( \Delta_{t+1}\1_{\{0<\Delta_{t+1}\leq L\}} \mid\F_t \right)+\frac{B}{4}.
\end{align*}  
Consequently, $$\E\left( \Delta_{t+1}\1_{\{0<\Delta_{t+1}\leq L\}} \mid\F_t \right) \geq \frac{B}{2(L+a+1)}\quad\text{a.s. on $\{a<X_t<a+1\}$.}$$  
Using the fact that $\Delta_{t+1}\1_{\{0<\Delta_{t+1}\leq L\}}\leq h+L\1_{\{\Delta_{t+1}\geq h\}},$ we thus obtain \eqref{eq:upward-jump}.

\textit{Step 2.} In this step we construct a stopping time $\tau$ and an event $G$ and show that $\tau$ is finite almost surely. 

Let $s\ge t_0$ be a finite stopping time satisfying $X_s>a$. 
Define $$\lambda:=\begin{cases} s, &\text{if }X_s\geq a+h,\\[1mm] \inf\{t\geq s:X_t\leq a\text{ or }X_t\geq a+h\}, &\text{if }a<X_s<a+h.\end{cases}$$
By \textup{(H1)}, choose a deterministic $N_0\geq a+2h$ sufficiently large that, for every $t\geq0$, 
\begin{equation}\label{eq:large-jump} \E\left( \Delta_{t+1}^2 \1_{\{|\Delta_{t+1}|>N_0/2\}} \mid\F_t \right) \leq \frac{B}{100}\quad\text{a.s.} \end{equation} 
Let $N$ be a $\F_s$-measurable random variable such that $N\ge N_0$ almost surely. Define 
$$\tau:=\begin{cases} \inf\{t\geq\lambda:X_t\leq a\text{ or }X_t>N\}, &\text{on $\{a+h\leq X_\lambda\leq N\}$},\\ \lambda, &\text{on $\{X_\lambda>N\}\cup\{X_\lambda<a+h\}$}.\end{cases}$$ 
Finally, define $G:=\{X_\lambda\geq a+h\}\cap\{\tau<\infty,\ X_\tau>N\}$.

We next prove that a.s.
\begin{equation}\label{eq:trial-launch} \P(X_\lambda\geq a+h\mid\F_s)\geq p \quad\text{and}\quad \E(\lambda-s\mid\F_s)\leq\frac{1}{p}.\end{equation}
Indeed, by repeated conditioning using \eqref{eq:upward-jump}, we have that on $\{a<X_s<a+h\}$,
$$\P(\lambda>s+n\mid\F_s) \leq (1-p)^n$$ almost surely for every integer $n\geq0$. 
In particular, $\lambda<\infty$ almost surely. Note that on $\{a<X_s<a+h\}$, if $\Delta_{s+1}\geq h$ then we must have $X_\lambda\geq a+h$. 
Consequently, \eqref{eq:trial-launch} holds a.s. on $\{a<X_s<a+h\}$. On $\{X_s\geq a+h\}$, these inequalities also hold since $\lambda=s$. Hence \eqref{eq:trial-launch} is verified.

We next work on $\{a+h\leq X_\lambda\leq N\}$. For every integer $n\geq1$, set $\tau_n:=\tau\wedge(\lambda+n)$. On $\{\lambda\leq t<\tau\}$, one has $X_t>a$. 
Therefore, a.s. on $\{\lambda\leq t<\tau\}$,
\begin{align*} \E(X_{t+1}^2-X_t^2\mid\F_t) &= 2X_t\E(\Delta_{t+1}\mid\F_t)+\E(\Delta_{t+1}^2\mid\F_t)\geq B. \end{align*}
Consequently, 
\begin{equation}\label{eq:square-lower} B\E(\tau_n-\lambda\mid\F_\lambda) \leq \E(X_{\tau_n}^2\mid\F_\lambda)-X_\lambda^2 \quad\text{a.s. on $\{a+h\leq X_\lambda\leq N\}$.}\end{equation} 
On $\{\tau>\lambda+n\}$, we have $X_{\tau_n}\leq N$. On $\{\tau\leq\lambda+n\}$, we have $X_{\tau-1}\leq N$. On the latter event, if $|\Delta_\tau|\leq N/2$, then $X_\tau\leq3N/2$ on $\{X_\tau>N\}$, while $X_\tau\leq a$ on $\{X_\tau\leq a\}$. On $\{\tau\leq\lambda+n,\ |\Delta_\tau|>N/2\}$, we have $X_\tau^2\leq2N^2+2\Delta_\tau^2\leq10\Delta_\tau^2$.
Therefore, a.s. on $\{a+h\leq X_\lambda\leq N\}$,
\begin{align} \E(X_{\tau_n}^2\mid\F_\lambda) \leq a^2&+\frac{9N^2}{4}\P(\tau<\infty,\ X_\tau>N\mid\F_\lambda)+N^2\P(\tau>\lambda+n\mid\F_\lambda)\notag\\ &+10\E\left( \Delta_\tau^2\1_{\{\tau\leq\lambda+n,\ |\Delta_\tau|>N/2\}} \mid\F_\lambda \right). \label{eq:square-upper} 
\end{align} 
Since $N$ is $\F_\lambda$-measurable and $N\geq N_0$, taking the decomposition over the possible values of $\tau$ and using \eqref{eq:large-jump}, we get that, a.s. on $\{a+h\leq X_\lambda\leq N\}$,
\begin{align*} 
\E\left(\Delta_\tau^2 \1_{\{\tau\leq\lambda+n,\ |\Delta_\tau|>N/2\}} \mid\F_\lambda \right) & \leq
\E\left[ \sum_{t=\lambda}^{\lambda+n-1} \1_{\{\tau>t\}} \E\left( \Delta_{t+1}^2 \1_{\{|\Delta_{t+1}|>N/2\}} \mid\F_t \right) \mid \F_\lambda \right]\\ &\leq\frac{B}{100}\E(\tau_n-\lambda\mid\F_\lambda).
\end{align*}
Combining \eqref{eq:square-lower} and \eqref{eq:square-upper}, and using the fact that $X_\lambda>a$, we obtain 
\begin{align}\label{eq:dur1}
\frac{9B}{10}\E(\tau_n-\lambda\mid\F_\lambda) &\leq \frac{9N^2}{4}\P(\tau<\infty,\ X_\tau>N\mid\F_\lambda)+N^2\P(\tau>\lambda+n\mid\F_\lambda)
\end{align}
almost surely on $\{a+h\leq X_\lambda\leq N\}$. 
Moreover, $$\P(\tau>\lambda+n\mid\F_\lambda) \leq \frac{1}{n}\E(\tau_n-\lambda\mid\F_\lambda)$$ almost surely on that event. 
For every integer $n\geq1$, on $\{a+h\leq X_\lambda\leq N,\ n\geq20N^2/(9B)\}$, the term $N^2\P(\tau>\lambda+n\mid\F_\lambda)$ in \eqref{eq:dur1} is thus at most $\frac{9B}{20}\E(\tau_n-\lambda\mid\F_\lambda)$.
Since $N<\infty$ almost surely, using this bound in \eqref{eq:dur1}, taking $n\to\infty$ and applying the conditional monotone convergence theorem, we obtain
\begin{equation}\label{eq:growth-duration} \E(\tau-\lambda\mid\F_\lambda) \leq \frac{5}{B}N^2\P(\tau<\infty,\ X_\tau>N\mid\F_\lambda)\quad \text{
a.s. on $\{a+h\leq X_\lambda\leq N\}$.}\end{equation} 
In particular, $\tau<\infty$ almost surely on that event. Since $\lambda<\infty$ almost surely and $\tau=\lambda$ on its complement, we have $\tau<\infty$ almost surely.

\textit{Step 3.} In this step, we complete the proof by establishing the lower bound for $\mathbb P(G\mid \F_s)$ and the upper bound for $\E(\tau-s\mid\mathcal{F}_s)$.

By \textup{(H1)}, $X_{\tau_n}-X_\lambda=\sum_{k=0}^{n-1}\1_{\{\lambda+k<\tau\}}\Delta_{\lambda+k+1}$ is integrable for every integer $n\geq1$. Taking conditional expectations and using \eqref{ineq:drift2}, we have $$X_\lambda\leq\E(X_{\tau_n}\mid\F_\lambda)\quad\text{a.s. on } \{a+h\leq X_\lambda\leq N\}.$$
On $\{\tau\leq\lambda+n,\ X_\tau>N,\ |\Delta_\tau|\leq N/2\}$, we have $X_\tau\leq3N/2$, while $X_\tau\leq a$ on $\{\tau\leq\lambda+n,\ X_\tau\leq a\}$. On $\{\tau\leq\lambda+n,\ |\Delta_\tau|>N/2\}$, we have $X_\tau\leq N+|\Delta_\tau|\leq3|\Delta_\tau|$.
Consequently, using $|z|\leq\frac{2}{N}z^2$ for $|z|>N/2$ and \eqref{eq:large-jump}, we obtain that,  a.s. on $\{a+h\leq X_\lambda\leq N\}$,
\begin{align*} \E(X_{\tau_n}\mid\F_\lambda) \leq a&+\frac{3N}{2}\P(X_\tau>N\mid\F_\lambda)+N\P(\tau>\lambda+n\mid\F_\lambda) \\ &+\frac{3B}{50N}\E(\tau_n-\lambda\mid\F_\lambda).
\end{align*}
Since $\tau<\infty$ almost surely, taking $n\to\infty$ and using \eqref{eq:growth-duration}, we obtain
$$h\leq X_\lambda-a\leq\left(\frac32+\frac{3}{10}\right)N\P(X_\tau>N\mid\F_\lambda)<2N\P(X_\tau>N\mid\F_\lambda)$$ almost surely on $\{a+h\leq X_\lambda\leq N\}$.
On $\{X_\lambda>N\}$, the event $G$ occurs.
Since $N\geq N_0\geq a+2h$, one has $h/(2N)\leq1$, and hence $$\P(G\mid\F_\lambda)\geq\frac{h}{2N}\1_{\{X_\lambda\geq a+h\}}$$ almost surely.
Taking conditional expectations with respect to $\F_s$ and using \eqref{eq:trial-launch}, we obtain $$\P(G\mid\F_s)\geq\frac{ph}{2N}$$ almost surely. Thus \eqref{eq:trial-success} holds with $c:=ph/2$.

Finally, $\tau-s=(\lambda-s)+\1_{\{X_\lambda\geq a+h\}}(\tau-\lambda)$.
Using \eqref{eq:trial-launch}, \eqref{eq:growth-duration}, and the tower property, we obtain
\begin{align*} \E(\tau-s\mid\F_s) &\leq \frac{1}{p}+\frac{5}{B}N^2 \E\left[\1_{\{X_\lambda\geq a+h\}}\P(X_\tau>N\mid\F_\lambda)\mid\F_s\right]\\ &=\frac{1}{p}+\frac{5}{B}N^2\P(G\mid\F_s) \end{align*} almost surely. This proves \eqref{eq:trial-duration}. By construction, $G^c\subseteq\{X_\tau\leq a\}$.
 \end{proof}

\begin{lemma}\label{lem:escape-scale} 
Let $\alpha\in(-1,1)$, $0\leq\beta<\frac{\alpha+1}{2}$. Assume that (H1) and \eqref{ineq:drift2} hold. Then there exists a deterministic constant $R<\infty$ such that, for every almost surely finite stopping time $s\geq t_0$, we have
$$\P(X_t\to\infty\mid\F_s)\geq \frac{1}{2}\quad\text{a.s. on $\big\{X_s\geq R(1+s)^{\frac{\beta}{1+\alpha}}\big\}$.}$$
\end{lemma}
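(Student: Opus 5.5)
Starting at time $s$ from $x_0:=X_s\ge R(1+s)^{\beta/(1+\alpha)}$, I will run a doubling scheme. Set $x_k:=2^k x_0$ and let $T_k$ be the first time after $T_{k-1}$ (with $T_0:=s$) at which $X$ exceeds $x_{k+1}$. The aim is to show that, with conditional probability at least $1/2$, every doubling $x_k\to x_{k+1}$ is completed within a deterministic time budget $\ell_k$ without the walk ever dropping below $x_k/2$. On this good event $X$ is always at least $x_k/2\ge a$, and $X$ eventually exceeds every $x_k$ while never returning below $x_{k}/2$ after $T_{k-1}$. Hence $X_t\to\infty$ on the good event.

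For the budget I will take $\ell_k:=\kappa\,x_k^{1-\alpha}(t_k)^{\beta}/\rho$ with $t_k:=s+\sum_{i<k}\ell_i$ and $\kappa$ a large constant. Over $[t_k,t_k+\ell_k]$, as long as $X_t\in[x_k/2,x_{k+1}]$ and $t\le t_k+\ell_k$, the drift is at least $\rho\, c_\alpha x_k^\alpha (t_k+\ell_k)^{-\beta}$, with $c_\alpha=2^{-|\alpha|}$. The first thing to check is that $t_k+\ell_k\le C t_k$, i.e.\ $\ell_k\lesssim t_k$. This reduces to $x_k^{1-\alpha}t_k^{\beta-1}\lesssim 1$; I expect to verify it inductively, using $x_k^2\gtrsim t_k^{2\beta/(1+\alpha)}$ and that the $t_k$ grow geometrically in $k$ at rate $2^{(1-\alpha)/(1-\beta)}$ or so. This bookkeeping is the likely messy part, and I may instead simply define $t_k$ as the solution of $t_k^{1-\beta}\asymp x_k^{1-\alpha}$ once $t_k\ge s$, using $R$ to absorb constants. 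With these choices the accumulated drift over the window is at least $\kappa c\,x_k\ge 2x_k$ once $\kappa$ is large enough. So if $X$ fails to reach $x_{k+1}$ or drops below $x_k/2$, the martingale part $M_t:=\sum(\Delta_{u+1}-\mu_u)$ must deviate by at least $x_k/2$ over the window.

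To control the martingale I will use (H1), which gives a uniform bound on conditional variances, together with Doob's $L^2$ maximal inequality applied to the stopped martingale. This gives
\[
\P\Big(\max_{t\le \ell_k}|M_{t_k+t}-M_{t_k}|\ge x_k/2\,\Big|\,\F_{t_k}\Big)\le \frac{4(L_0^2+1)\ell_k}{x_k^2}\asymp \frac{x_k^{-1-\alpha}t_k^{\beta}}{1}.
\]
With $t_k^{\beta}\lesssim (x_k/R)^{(1+\alpha)}\cdot(\text{something decaying in }k)$, I must show this series is geometric in $k$ with sum at most $1/2$ once $R$ is large. Since $t_k$ is the time scale of the $k$-th doubling, $t_k^{1-\beta}\asymp x_k^{1-\alpha}$ (plus $s$), and the term is $\asymp x_k^{-1-\alpha+\beta(1-\alpha)/(1-\beta)}$. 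Its exponent is negative exactly when $\beta(1-\alpha)<(1+\alpha)(1-\beta)$, i.e.\ $2\beta<1+\alpha$, so the terms decay geometrically in $k$. The initial regime where $s$ dominates $t_k$ gives terms $\lesssim s^\beta x_k^{-1-\alpha}\le R^{-(1+\alpha)}2^{-k(1+\alpha)}$ by the hypothesis on $X_s$. Summing and choosing $R$ large makes the total failure probability at most $1/2$.

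Finally I will glue the windows with the tower property at the stopping times $T_k$ rather than at deterministic times, because the walk may double early, and earlier is only better since the drift bound is monotone in $t$. I will also note that an overshoot past $x_{k+1}$ only helps, since I restart from the actual position. I expect the main obstacle to be making the time-scale bookkeeping for $t_k$ rigorous uniformly in the random start $(s,X_s)$, with the condition $2\beta<1+\alpha$ entering exactly in the summability.
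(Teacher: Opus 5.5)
Your architecture is the same as the paper's. You use successive doublings and a time budget over which the guaranteed drift exceeds a multiple of the current level. Doob's $L^2$ inequality with the conditional variance bound from (H1) gives a failure probability $\lesssim \ell_k/x_k^2$. A two-regime estimate then produces the terms $(1+s)^\beta x_k^{-(1+\alpha)}$ and $x_k^{-(1+\alpha-2\beta)/(1-\beta)}$, which are geometrically summable precisely because $2\beta<1+\alpha$. Using the $\mathcal F_s$-measurable levels $2^kx_0$ instead of the actual positions $X_{S_k}$ is harmless, since an overshoot only makes some windows trivial.

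However, the budget you actually write down does not work, and the step you flag as messy is false. The hypothesis bounds $X_s$ only from below, so $x_0^{1-\alpha}\gg s^{1-\beta}$ is allowed (take $s$ equal to the deterministic threshold time and $X_s$ huge). Then $\ell_0=\kappa x_0^{1-\alpha}s^\beta/\rho\gg s$, so $s+\ell_0\le Cs$ fails. Your proposed induction via $x_k^2\gtrsim t_k^{2\beta/(1+\alpha)}$ cannot help: that is a lower bound on $x_k$, whereas you need the upper bound $x_k^{1-\alpha}\lesssim t_k^{1-\beta}$. In this regime the drift that the hypothesis guarantees over the window is only of order
\[
x_0^{\alpha}\sum_{t=s}^{s+\ell_0-1}t^{-\beta}\asymp x_0^{\alpha}\ell_0^{1-\beta}\asymp x_0\Big(\frac{s^{1-\beta}}{x_0^{1-\alpha}}\Big)^{\beta}\ll x_0 \qquad(\beta>0).
\]
So a failed doubling no longer forces a martingale deviation of order $x_0$, and the argument breaks already in the first window.

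What is missing is the second time scale $x_k^{(1-\alpha)/(1-\beta)}$ in the budget. The paper chooses $H_k$ minimal with $\sum_{\ell<H_k}(1+S_k+\ell)^{-\beta}\ge\rho^{-1}2^{|\alpha|+2}x_k^{1-\alpha}$. That is, it prescribes the summed drift rather than the window length times the drift at the right endpoint. Integral comparison then gives $H_k\lesssim(1+S_k)^\beta x_k^{1-\alpha}+x_k^{(1-\alpha)/(1-\beta)}+1$, and the middle term is exactly what your $\ell_k$ lacks. Telescoping $(1+S_{i+1})^{1-\beta}-(1+S_i)^{1-\beta}\lesssim x_i^{1-\alpha}$ then yields $(1+S_k)^{1-\beta}\lesssim(1+s)^{1-\beta}+x_k^{1-\alpha}$.

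Your fallback also works once made precise. Put $t_k:=\max\{s,\,Ax_k^{(1-\alpha)/(1-\beta)}\}$ and $\ell_k:=\kappa x_k^{1-\alpha}t_k^\beta/\rho$. A geometric sum gives $s+\sum_{i\le k}\ell_i\le(1+C\kappa A^{\beta-1})t_k\le 2t_k$ once $A$ is large relative to $\kappa$. Hence the windows fit inside $[s,2t_k]$ and the accumulated drift is at least $2^{-|\alpha|-\beta}\kappa x_k$. Your summability paragraph already computes with this corrected time scale, so with the budget repaired the rest of your proof goes through.
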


\begin{proof} 
Fix a sufficiently large deterministic constant $R$. Let $s\geq t_0$ be an almost surely finite stopping time. We define the stopping times $(S_k)_{k\ge0}$, the random variables $(x_k)_{k\ge 0}$ and the events $(E_k)_{k\ge 0}$ recursively as follows. Set $S_0:=s$, $x_0:=X_s$ and $E_{-1}:=\{X_s\geq R(1+s)^{\frac{\beta}{1+\alpha}}\}$. Suppose that $S_k$, $x_k$ and $E_{k-1}$ have been defined.
Define $$H_k := \inf\left\{ m\geq1: \sum_{\ell=0}^{m-1} (1+S_k+\ell)^{-\beta} \geq \rho^{-1}2^{|\alpha|+2}x_k^{1-\alpha} \right\},$$ 
which is $\F_{S_k}$-measurable. Since $\beta<1$, one has $H_k<\infty$ almost surely. 
On $E_{k-1}$, define 
$$S_{k+1} := \inf\left\{ t\geq S_k: X_t\leq\frac{x_k}{2} \text{ or } X_t\geq2x_k \right\} \wedge(S_k+H_k), $$ and, on $E_{k-1}^c$, set $S_{k+1}:=S_k$. Set $$ x_{k+1}:=X_{S_{k+1}}, \quad E_k := E_{k-1} \cap \{X_{S_{k+1}}\geq2x_k\}.$$ 
Then every $S_k$ is an almost surely finite stopping time and \begin{equation}\label{eq:level-growth} x_{k+1}\geq2x_k \quad\text{a.s. on $E_k$}. \end{equation}
Choose $R\geq1$ such that $R(1+t_0)^{\frac{\beta}{1+\alpha}}>2a.$ 
On $\{X_s\geq R(1+s)^{\frac{\beta}{1+\alpha}}\}\cap E_{k-1}$, one has $x_k\geq x_0$, and hence $\frac{x_k}{2}>a.$ 
Therefore, $\frac{x_k}{2}<X_t<2x_k$ almost surely on $\{S_k\leq t<S_{k+1}\}\cap \{X_s\geq R(1+s)^{\frac{\beta}{1+\alpha}}\}\cap E_{k-1}.$ 
Hence $X_t^\alpha\geq2^{-|\alpha|}x_k^\alpha$ on the same event. 
It follows from \eqref{ineq:drift2} that \begin{equation}\label{eq:escape-drift} \E(\Delta_{t+1}\mid\F_t) \geq \rho2^{-|\alpha|}x_k^\alpha(1+t)^{-\beta} 
\end{equation} 
almost surely on $\{S_k\leq t<S_{k+1}\}\cap \{X_s\geq R(1+s)^{\frac{\beta}{1+\alpha}}\}\cap E_{k-1}.$

For $n\geq0$, define $$ M_n^{(k)} := \sum_{\ell=0}^{n-1} \1_{\{S_k+\ell<S_{k+1}\}} \left( \Delta_{S_k+\ell+1} - \E(\Delta_{S_k+\ell+1}\mid\F_{S_k+\ell}) \right). $$
Then $(M_n^{(k)})_{n\geq0}$ is a martingale with respect to $(\F_{S_k+n})_{n\geq0}$. By \textup{(H1)}, there exists a deterministic constant $K_0<\infty$ such that, a.s.,
$$ \E\left( \left( M_{n+1}^{(k)}-M_n^{(k)} \right)^2 \mid\F_{S_k+n} \right) \leq K_0\1_{\{S_k+n<S_{k+1}\}}. $$

On $E_{k-1}\setminus E_k$, either $X_{S_{k+1}}\leq x_k/2$, or $S_{k+1}=S_k+H_k$ and $X_{S_{k+1}}<2x_k$. 
In the first case, since the conditional drift is nonnegative before $S_{k+1}$, 
$$ M_{S_{k+1}-S_k}^{(k)} \leq -\frac{x_k}{2}\quad\text{a.s. on $\{X_s\geq R(1+s)^{\frac{\beta}{1+\alpha}}\}\cap E_{k-1}$.} $$ 
In the second case, by \eqref{eq:escape-drift} and the definition of $H_k$, we have 
$$ \sum_{\ell=0}^{H_k-1} \E(\Delta_{S_k+\ell+1}\mid\F_{S_k+\ell}) \geq \rho2^{-|\alpha|}x_k^\alpha \sum_{\ell=0}^{H_k-1} (1+S_k+\ell)^{-\beta} \geq \rho2^{-|\alpha|}x_k^{\alpha}\cdot \rho^{-1}2^{|\alpha|+2}x_k^{1-\alpha} =4x_k $$ almost surely on $\{X_s\geq R(1+s)^{\frac{\beta}{1+\alpha}}\}\cap E_{k-1}.$ 
Since $X_{S_k+H_k}<2x_k$, one has $$ M_{H_k}^{(k)}\leq-3x_k $$ almost surely on the same event. 
Consequently, $$ E_{k-1}\setminus E_k \subseteq \left\{ \max_{0\leq n\leq H_k} |M_n^{(k)}| \geq \frac{x_k}{2} \right\} \quad\text{a.s. on $\{X_s\geq R(1+s)^{\frac{\beta}{1+\alpha}}\}\cap E_{k-1}.$}$$  
Since $H_k$ is $\F_{S_k}$-measurable, applying the conditional Doob inequality on each event $\{H_k=m\}$ and summing over $m\geq1$, we obtain
\begin{equation}\label{eq:escape-failure} \P(E_{k-1}\setminus E_k\mid\F_{S_k}) \leq \frac{4K_0H_k}{x_k^2} \quad\text{a.s. on $\{X_s\geq R(1+s)^{\frac{\beta}{1+\alpha}}\}\cap E_{k-1}.$}\end{equation} 

We next estimate $H_k$. By the definition and minimality of $H_k$, $$ \sum_{\ell=0}^{H_k-1}(1+S_k+\ell)^{-\beta}\leq \rho^{-1}2^{|\alpha|+2}x_k^{1-\alpha}+1. $$ By integral comparison, there exists a deterministic constant $C<\infty$ such that 
\begin{equation}\label{eq:escape-bound} H_k\leq C\left((1+S_k)^\beta x_k^{1-\alpha}+x_k^{\frac{1-\alpha}{1-\beta}}+1\right). \end{equation} almost surely on $\{X_s\geq R(1+s)^{\frac{\beta}{1+\alpha}}\}\cap E_{k-1}$.
On $E_i$, one has $S_{i+1}\leq S_i+H_i$.
Moreover, 
\begin{align*} (1+S_{i+1})^{1-\beta}-(1+S_i)^{1-\beta} &\leq (1-\beta)\sum_{\ell=0}^{H_i-1}(1+S_i+\ell)^{-\beta}\leq C\left(x_i^{1-\alpha}+1\right) \end{align*} 
almost surely on $\{X_s\geq R(1+s)^{\frac{\beta}{1+\alpha}}\}\cap E_i$. 
Since $R\geq1$, one has $x_i\geq1$ on $E_i$. 
Therefore, $$ (1+S_{i+1})^{1-\beta}-(1+S_i)^{1-\beta}\leq Cx_i^{1-\alpha}\quad\text{a.s. on $\{X_s\geq R(1+s)^{\frac{\beta}{1+\alpha}}\}\cap E_i$.}$$  
Summing over $i=0,\ldots,k-1$ and using \eqref{eq:level-growth}, we obtain \begin{align*} (1+S_k)^{1-\beta} &\leq (1+s)^{1-\beta}+C\sum_{i=0}^{k-1}x_i^{1-\alpha}\leq C\left((1+s)^{1-\beta}+x_k^{1-\alpha}\right) \end{align*} almost surely on $\{X_s\geq R(1+s)^{\frac{\beta}{1+\alpha}}\}\cap E_{k-1}$. 
Consequently, $$ (1+S_k)^\beta\leq C\left((1+s)^\beta+x_k^{\frac{\beta(1-\alpha)}{1-\beta}}\right) $$ almost surely on the same event. 
Combining this estimate with \eqref{eq:escape-bound} and \eqref{eq:escape-failure}, we obtain 
\begin{align*} \P(E_{k-1}\setminus E_k\mid\F_{S_k}) &\leq C\left((1+s)^\beta x_k^{-1-\alpha}+x_k^{-\frac{1+\alpha-2\beta}{1-\beta}}+x_k^{-2}\right) \end{align*} almost surely on $\{X_s\geq R(1+s)^{\frac{\beta}{1+\alpha}}\}\cap E_{k-1}$. 
By \eqref{eq:level-growth}, on $E_{k-1}$, we have $x_k\geq2^kx_0$. Since $E_{k-1}\in\F_{S_k}$, we have $\P(E_{k-1}\setminus E_k\mid\F_{S_k})=0$ on $E_{k-1}^c$. Taking conditional expectations with respect to $\F_s$, summing the geometric series and choosing sufficiently large $C$, we obtain 
\begin{align*} \sum_{k=0}^{\infty}\P(E_{k-1}\setminus E_k\mid\F_s) &\leq C\left((1+s)^\beta x_0^{-1-\alpha}+x_0^{-\frac{1+\alpha-2\beta}{1-\beta}}+x_0^{-2}\right) \end{align*}
almost surely on $\{X_s\geq R(1+s)^{\frac{\beta}{1+\alpha}}\}$.
Since $x_0\geq R(1+s)^{\frac{\beta}{1+\alpha}}$ on $E_{-1}$, one has $(1+s)^\beta x_0^{-1-\alpha}\leq R^{-1-\alpha}$. The other two terms are bounded by the corresponding negative powers of $R$. 
Choose $R$ sufficiently large that $$C\left(R^{-1-\alpha}+R^{-\frac{1+\alpha-2\beta}{1-\beta}}+R^{-2}\right)\leq\frac12.$$ 
For every integer $m\geq0$, we thus get
\begin{align*} \P(E_m^c\mid\F_s) &=\P\left(\bigcup_{k=0}^m(E_{k-1}\setminus E_k)\mid\F_s\right)\leq\sum_{k=0}^m\E\left[\1_{E_{k-1}}\P(E_{k-1}\setminus E_k\mid\F_{S_k})\mid\F_s\right]\leq\frac12 \end{align*} almost surely on $\{X_s\geq R(1+s)^{\frac{\beta}{1+\alpha}}\}$. 
Taking $m\to\infty$, we obtain $$ \P\left(\bigcap_{k=0}^{\infty}E_k\mid\F_s\right)\geq\frac12\quad\text{a.s. on $\{X_s\geq R(1+s)^{\frac{\beta}{1+\alpha}}\}$.} $$ On $\bigcap_{k=0}^{\infty}E_k$, every $S_k$ is finite and $x_k\geq2^kx_0\to\infty$.
Moreover, $X_t>x_k/2$ for every $S_k\leq t<S_{k+1}$. Since $S_{k+1}>S_k$, one has $S_k\to\infty$. Therefore, 
$$ X_t\to\infty\quad\text{a.s. on $\bigcap_{k=0}^{\infty}E_k$}. $$ 
Consequently, 
$$ \P(X_t\to\infty\mid\F_s)\geq\frac12\quad\text{a.s. on $\{X_s\geq R(1+s)^{\frac{\beta}{1+\alpha}}\}$.} $$ 
\end{proof}

\begin{proof}[Proof of Theorem~\ref{thm:transience}]
By \textup{(H3)}, there exists a deterministic constant $C<\infty$ such that \begin{equation}\label{eq:sojourn-mean} \E(R_j\mid\F_{\sigma_j})\leq C \end{equation} almost surely on $\{\sigma_j<\infty\}$ for every $j\geq1$.
Let $$ D:=\{X_t\to\infty\}.$$ Let $R$ be the deterministic constant given by Lemma~\ref{lem:escape-scale}. Choose $R$ sufficiently large such that $R\geq N_0$, where $N_0$ is the deterministic constant in Lemma~\ref{lem:one-trial}.

\textit{Step 1.} In this step, we show that
\begin{equation}\label{eq:cond-rt} X_t\leq a \quad\text{infinitely often a.s. on $D^c$}.
\end{equation}

Choose a sufficiently large deterministic integer $T_0\ge t_0$. Let $u\geq T_0$ be an almost surely finite stopping time. For every integer $n\geq1$, set
$$ N:=R(1+u+n)^{\frac{\beta}{1+\alpha}}.$$
The random variable $N$ is $\F_u$-measurable and satisfies $N\geq N_0$. On $\{X_u>N\}$, it is clear that $X_u>R(1+u)^{\frac{\beta}{1+\alpha}}$. Applying Lemma~\ref{lem:one-trial} on $\{a<X_u\leq N\}$ and setting $\tau:=u$ on its complement, we obtain an almost surely finite stopping time $\tau\geq u$ such that
$$\E(\tau-u\mid\F_u) \leq C( 1+N^2 ) \leq C\left( 1+R^2(1+u+n)^{\frac{2\beta}{1+\alpha}}\right)\quad\text{a.s. on $\{a < X_u\leq N\}$.}$$
Note that $\{a<X_t\leq R(1+t)^{\frac{\beta}{1+\alpha}} \text{ for every }u\leq t\leq u+n\}\subset \{\tau-u>n\}$.
Therefore, a.s. on $\{X_u>a\}$,
\begin{align*} &\P\left( a<X_t\leq R(1+t)^{\frac{\beta}{1+\alpha}} \text{ for every }u\leq t\leq u+n \mid \F_u \right)\leq \frac{C}{n} \left( 1+R^2(1+u+n)^{{\frac{2\beta}{1+\alpha}}} \right).
\end{align*}
Since $0\le \frac{\beta}{\alpha+1}<1/2$, the right-hand side converges to zero as $n\to\infty$. Consequently, almost surely on $\{X_u>a\}$, there exists a later time $t\ge u$ such that either $X_t\le a$ or $X_t>R(1+t)^{\frac{\beta}{1+\alpha}}$.
Fix a deterministic integer $m\geq T_0$. Set $u_0:=m$, and recursively define
$$ u_{k+1}:=\inf\left\{t\geq u_k+1:X_t\leq a\text{ or }X_t>R(1+t)^{\frac{\beta}{1+\alpha}}\right\},$$ with the convention $\inf\varnothing=\infty$. Applying this conclusion at $u=u_k+1$ on $\{u_k<\infty\}$, we obtain $u_k<\infty$ for every $k\geq1$ almost surely on $\{X_t>a\text{ for every }t\geq m\}$. Hence on the same event, $$X_{u_k}>R(1+u_k)^{\frac{\beta}{1+\alpha}} \quad\text{for each $k\geq1$}\quad\text{and}\quad u_k\to\infty.$$
Applying Lemma~\ref{lem:escape-scale}, we thus have $$\P(D\mid\F_{u_k})\geq 1/2 \quad\text{for each $k\ge 1$ a.s. on $\{X_t>a\text{ for every }t\geq m\}$}.$$ By L\'evy's upward theorem, we have a.s. $\P(D\mid\F_{u_k})\rightarrow \1_D=0$ on $D^c\cap\{X_t>a\text{ for every }t\geq m\}$ as $k\to\infty$, which is a contradiction. Hence $$ \P\left(D^c\cap\{X_t>a\text{ for every }t\geq m\}\right)=0. $$ Taking the union over all deterministic integers $m\geq T_0$, we obtain \eqref{eq:cond-rt}.

\textit{Step 2.} Set $\tau_0:=T_0$ and $S_0:=\inf\{r\geq \tau_0:X_r\leq a\}$. In this step we construct stopping times $(\tau_j)_{j\ge 1}$ and events $(G_j)_{j\ge 1}$, with $G_j\in\F_{\tau_j}$, such that for every $j\geq1$, the stopping times
$$S_j:=\inf\{r\geq\tau_j:X_r\leq a\}, \quad T_{j}:= \inf\left\{ t\geq S_{j-1}: X_t>a \right\}$$
and $\tau_j$ are finite almost surely on $D^c$, and $\tau_j\geq T_j$.
We set $\tau_j:=\infty$ on $\{T_j=\infty\}$ and take $G_j\subseteq\{T_j<\infty\}$. We interpret $\tau_j-T_j$ as zero on $\{T_j=\infty\}$ and $T_{j+1}-S_j$ as zero on $\{S_j=\infty\}$.
Moreover, $\tau_j<\infty$ almost surely on $\{T_j<\infty\}$, and the following relations hold almost surely on this event:
\begin{align}
\label{eq:inclusion-transience}
&G_j\subset \{X_{\tau_j}> 2R(1+T_j)^{\frac{\beta}{1+\alpha}} \},\quad G_j^c\subset \{X_{\tau_j}\le a\},\\
\label{eq:trial-prob} & \P(G_j\mid\F_{T_j})\geq c(1+T_j)^{-{\frac{\beta}{1+\alpha}}},\\
\label{eq:trial-dur}
& \E(\tau_j-T_j\mid\F_{T_j}) \leq C\left( 1+(1+T_j)^{{\frac{2\beta}{1+\alpha}}}\P(G_j\mid\F_{T_j}) \right).
\end{align}

By \eqref{eq:cond-rt}, we have $S_0<\infty$ a.s. on $D^c$. By \eqref{eq:sojourn-mean}, every sojourn beginning at a finite $\sigma_i$ ends almost surely, and hence $T_1<\infty$ a.s. on $D^c$. Suppose that $T_j<\infty$ a.s. on $D^c$. Applying Lemma~\ref{lem:one-trial} at time $T_j$ with $N:=2R(1+T_j)^{\frac{\beta}{1+\alpha}}$ on $\{T_j<\infty\}$, we obtain an event $G_j\in\F_{\tau_j}$ and a stopping time $\tau_j\geq T_j$, finite almost surely on $\{T_j<\infty\}$, such that \eqref{eq:inclusion-transience}, \eqref{eq:trial-prob} and \eqref{eq:trial-dur} hold on this event.
By \eqref{eq:terminal}, $X_t>a$ for every $T_j\leq t<\tau_j$ on $\{T_j<\infty\}$, and hence $S_j=\inf\{t>T_j:X_t\leq a\}$ on this event. On $\{S_j=\sigma_i<\infty\}$, we have $T_{j+1}=\zeta_i$. Since $\{S_j=\sigma_i<\infty\}\in\F_{\sigma_i}$, it follows from \eqref{eq:sojourn-mean} that
\begin{equation}\label{eq:wait}
\E(T_{j+1}-S_j\mid\F_{S_j})\leq C\quad\text{a.s. on $\{S_j<\infty\}$.}
\end{equation}
By \eqref{eq:cond-rt}, we must have $S_j<\infty$ a.s. on $D^c$. By \eqref{eq:wait}, we thus obtain that $T_{j+1}<\infty$ a.s. on $D^c$. Applying this argument recursively, we obtain the claim of Step 2.

\textit{Step 3.} In this step, we prove that
\begin{equation}\label{eq:diverge} \sum_{j=1}^{\infty}\P(G_j\mid\F_{T_j})=\infty \quad\text{a.s. on $D^c$}.\end{equation}
Set $$ \mathcal E := D^c \cap \left\{ \sum_{j=1}^{\infty}\P(G_j\mid\F_{T_j})<\infty \right\}. $$ We prove that $\P(\mathcal E)=0$.
Since $G_j\in\F_{\tau_j}\subseteq\F_{T_{j+1}}$, by the conditional Borel--Cantelli lemma, $G_j^c$ occurs for every sufficiently large $j$ almost surely on $\mathcal E$.
By \eqref{eq:inclusion-transience}, we then have $X_{\tau_j}\leq a$ for every sufficiently large $j$ a.s. on $\mathcal E$. Hence
\begin{align}\label{eq:Stau}
S_j=\tau_j \quad\text{for every sufficiently large $j$ a.s. on $\mathcal E$}.\end{align}
If $\beta=0$, then by \eqref{eq:trial-prob},
$$ \P(G_j\mid\F_{T_j})\geq c$$
for every $j$ a.s. on $\mathcal E$, which contradicts the definition of $\mathcal E$. We may therefore assume that $\beta>0$. We set $(1+T_j)^{-2\beta/(1+\alpha)}:=0$ on $\{T_j=\infty\}$. Since $G_j\subseteq\{T_j<\infty\}$, we have $\P(G_j\mid\F_{T_j})=0$ on $\{T_j=\infty\}$. By \eqref{eq:trial-prob}, we thus have for every $j\geq1$,
$$ (1+T_j)^{-\frac{2\beta}{1+\alpha}} \leq C\P(G_j\mid\F_{T_j})^2 \leq C\P(G_j\mid\F_{T_j})\quad\text{a.s.}$$
For every deterministic integer $m\geq1$, define $$ \nu_m := \inf\left\{ n\geq1: \sum_{j=1}^n\P(G_j\mid\F_{T_j})>m \right\}.$$
Since $\{j\leq\nu_m\}\in\F_{T_j}$, it follows from \eqref{eq:trial-dur} that \begin{align*} \E\left[ \sum_{j=1}^{n\wedge\nu_m} \frac{\tau_j-T_j} {(1+T_j)^{\frac{2\beta}{1+\alpha}}} \right]&\leq C\E\left[ \sum_{j=1}^{n\wedge\nu_m} \left( (1+T_j)^{-\frac{2\beta}{1+\alpha}} + \P(G_j\mid\F_{T_j}) \right) \right]\leq C(m+1). \end{align*}
Moreover, since $T_j\leq S_j$, one has $\{j\leq\nu_m\}\in\F_{S_j}$. Hence, by \eqref{eq:wait},
\begin{align*} &\E\left[ \sum_{j=1}^{n\wedge\nu_m} \frac{T_{j+1}-S_j} {(1+T_j)^{\frac{2\beta}{1+\alpha}}} \right]\leq C\E\left[ \sum_{j=1}^{n\wedge\nu_m} (1+T_j)^{-\frac{2\beta}{1+\alpha}} \right]\leq C(m+1). \end{align*}
Taking $n\to\infty$ and using the monotone convergence theorem, we obtain that a.s. \begin{equation}\label{eq:normalized} \sum_{j=1}^{\nu_m} \frac{ (\tau_j-T_j)+(T_{j+1}-S_j) }{ (1+T_j)^{\frac{2\beta}{1+\alpha}} } < \infty. \end{equation}
By the definition of $\mathcal E$, we note that $\mathcal E \subseteq \bigcup_{m=1}^{\infty}\{\nu_m=\infty\}$. Therefore, by \eqref{eq:normalized},
$$\sum_{j=1}^{\infty} \frac{ (\tau_j-T_j)+(T_{j+1}-S_j) }{ (1+T_j)^{\frac{2\beta}{1+\alpha}} } < \infty\quad\text{a.s. on $\mathcal E$}.$$
Since $0<\frac{\beta}{1+\alpha}<\frac12$, we notice that \begin{align*} &(1+T_{j+1})^{1-\frac{2\beta}{1+\alpha}}-(1+T_j)^{1-\frac{2\beta}{1+\alpha}} \leq \left(1-\frac{2\beta}{1+\alpha}\right)(1+T_j)^{-\frac{2\beta}{1+\alpha}}(T_{j+1}-T_j) \end{align*} almost surely on $\mathcal E$.
By \eqref{eq:Stau}, we note that $T_{j+1}-T_j=(\tau_j-T_j)+(T_{j+1}-S_j)$ for every sufficiently large $j$ almost surely on $\mathcal E$.
Consequently, it follows from \eqref{eq:normalized} that
$$ \sup_{j\geq1}(1+T_j)^{1-\frac{2\beta}{1+\alpha}}<\infty \quad\text{a.s. on $\mathcal E$.}$$
Hence $\sup_{j\geq1}T_j<\infty$ almost surely on $\mathcal E$. This contradicts $T_{j+1}\geq T_j+1$ for every $j$ almost surely on $D^c$, and therefore almost surely on $\mathcal E$. Thus $\P(\mathcal E)=0$, which proves \eqref{eq:diverge}.

\textit{Step 4.} In this step we complete the proof of transience. Choose a deterministic $\varepsilon\in(0,1)$ sufficiently small such that $(1+\varepsilon)^{\frac{\beta}{1+\alpha}}\leq2.$ We take $T_0$ sufficiently large that $\varepsilon(1+T_0)\geq2$.
Define
$$ \widehat G_j := G_j \cap \left\{ T_j<\infty,\ \tau_j-T_j \leq \lfloor\varepsilon(1+T_j)\rfloor \right\}.$$
By the conditional Markov inequality and \eqref{eq:trial-dur}, \begin{align*}
\P(G_j\mid\F_{T_j})-\P(\widehat G_j\mid\F_{T_j}) &\leq \frac{C\left(1+(1+T_j)^{{\frac{2\beta}{1+\alpha}}}\P(G_j\mid\F_{T_j})\right)}{\lfloor\varepsilon(1+T_j)\rfloor}\quad\text{a.s. on $D^c$}.
\end{align*}
Using \eqref{eq:trial-prob} and the fact that $\frac{\beta}{1+\alpha}\in[0,1/2)$, and choosing $T_0$ sufficiently large, we obtain
$\P(\widehat G_j\mid\F_{T_j})\geq\frac12\P(G_j\mid\F_{T_j})$ almost surely on $D^c$.
Therefore,
\begin{equation}\label{eq:timely-prob} \sum_{j=1}^{\infty}\P(\widehat G_j\mid\F_{T_j})=\infty \quad\text{a.s. on $D^c$}.
\end{equation}
Since $\widehat G_j\in\F_{\tau_j}\subseteq\F_{T_{j+1}}$, by the conditional Borel--Cantelli lemma, the events $(\widehat G_j)_{j\geq1}$ occur infinitely often almost surely on $D^c$.
On $\widehat G_j\cap D^c$, we have 
$$ X_{\tau_j}>2R(1+T_j)^{\frac{\beta}{1+\alpha}}\quad\text{and}\quad 1+\tau_j\leq(1+\varepsilon)(1+T_j)$$
and thus
$$ X_{\tau_j}>R(1+\tau_j)^{\frac{\beta}{1+\alpha}}. $$
By Lemma~\ref{lem:escape-scale}, we thus have
$\P(D\mid\F_{\tau_j})\geq1/2$ almost surely on $\widehat G_j\cap D^c$.
By L\'evy's upward theorem, we have $\P(D\mid\F_t)\to\1_D=0$ as $t\to\infty$ a.s. on $D^c$. Since $\widehat G_j$ occurs infinitely often on $D^c$, and the corresponding times $\tau_j$ tend to infinity, this is a contradiction. Hence $X_t\to\infty$ almost surely.
\end{proof}
\section*{Acknowledgements}
Tuan-Minh Nguyen was partially supported by the Australian Research Council under grant ARC DP230102209.

\bibliographystyle{amsplain}
\bibliography{refs}

\end{document}